\begin{document}

% These will be typeset in italics
\newtheorem{theorem}{Theorem}[section]
\newtheorem{proposition}[theorem]{Proposition}
\newtheorem{lemma}[theorem]{Lemma}
\newtheorem{corollary}[theorem]{Corollary}
\newtheorem{fact}[theorem]{Fact}

% These will be typeset in Roman
\theoremstyle{definition}
\newtheorem{definition}[theorem]{Definition}
\newtheorem{conjecture}[theorem]{Conjecture}
\newtheorem{notation}[theorem]{Notation}

\theoremstyle{remark}
\newtheorem{remark}[theorem]{Remark}
\newtheorem{example}[theorem]{Example}
\newtheorem{question}[theorem]{Question}

\numberwithin{equation}{section}

\def\forkindep{\mathrel{\raise0.2ex\hbox{\ooalign{\hidewidth$\vert$\hidewidth\cr\raise-0.9ex\hbox{$\smile$}}}}}
\def\dcl{\mathrm{dcl}}
\def\acl{\mathrm{acl}}
\def\DCF{\mathrm{DCF}_0}
\def\tp{\mathrm{tp}}
\def\stp{\mathrm{stp}}
\def\alg{\mathrm{alg}}

\def\ld{\mathrm{log}\delta}
\def\C{\mathcal C}
\def\U{\mathcal U}
\def\P{\mathcal P}
\def\Q{\mathbb Q}

\title[Constructing types in differentially closed fields]
{Constructing types in differentially closed fields that are analysable in the constants}
\author{Ruizhang Jin}
\date{\today}
\begin{abstract}
  Analysability of finite $U$-rank types are explored both in general and in the theory
  $\DCF$. The well-known fact that the equation $\delta(\ld x)=0$ is analysable in
  but not almost internal to the constants is generalized to show
   that $\underbrace{\ld...\ld}_n x=0$ is not analysable in the constants in $(n-1)$-steps.
   The notion of a \emph{canonical analysis} is introduced -- namely an analysis that is of minimal
   length and interalgebraic with every other analysis of that length. Not every analysable type admits a canonical
   analysis.
   Using
  properties of reductions and coreductions in theories with the canonical base property,
  it is constructed, for any sequence of positive integers $(n_1,...,n_\ell)$,
  a type in $\DCF$ that admits a canonical analysis
   with the property that the $i$th step has $U$-rank $n_i$.
\end{abstract}
\maketitle

\tableofcontents

\section{Introduction}\label{intr}
\noindent
That differential-algebraic geometry is an expansion of algebraic geometry is reflected in model theory by viewing the theory of algebraically closed fields as a reduct of the theory of differentially closed fields. The locus of that reduct is the field of constants. The smallest intermediate reduct that properly expands algebraic geometry is that of differential varieties that are {\em almost internal} to the constants: differential varieties that over possibly additional parameters become definable finite covers of algebraic varieties in the constants. Here already one observes new and interesting geometric and model theoretic phenomena. A further step would be to consider those differential varieties that are built up through a finite sequence of fibrations whose fibres are almost internal to the constants; these are the differential varieties that are {\em analysable in the constants}, and they are the focus of this paper.
In particular, we give some constructions that exhibit the richness of this category.

Differential varieties analysable in the constants have come up recently in applications; it is shown in~\cite{bell2016d} that they give rise to a new class of associative algebras satisfying the classical Dixmier-Moeglin equivalence.

Probably the best known example of an analysable but not internal to the constants differential variety is the one defined by the equation $\displaystyle\delta\left(\frac{\delta x}{x}\right)=0$.
It decomposes as an extension of the additive group of constants by the multiplicative group of constants, without itself being almost internal to the constants.
Our first observation is to generalize this construction by iterating the logarithmic derivative.
Writing $\displaystyle{\ld x:=\frac{\delta x}{x}}$ and $\ld^{(m)}=\underbrace{\ld...\ld}_m$ we consider the equation $\ld^{(m)}x=0$, and show in Section \ref{siter} that while it is analysable in the constants in $m$ steps, it is not analysable in $m-1$ steps.
This is done in Section \ref{siter} by essentially reducing to the $m=2$ case.

Note that  each step in the analysis of $\ld^{(m)}x=0$ is of $U$-rank one.
It is not hard to produce from this example, using methods that work generally in stable theories
 satisfying the canonical base property (CBP), including reductions and coreductions,
  other examples of types analysable in the constants in $m$-steps but not in $m-1$-steps. We may even require this type to satisfy the property that the $i$th step of the analysis by reductions
 of this type is of $U$-rank $n_i$, for any given increasing sequence $\displaystyle(n_i)_{i=1}^m$,
 or that the $i$th step of the analysis by coreductions
 of this type is of $U$-rank $n_i$, for any given decreasing sequence $\displaystyle(n_i)_{i=1}^m$.
 This is done in Section \ref{redu}.

But we look for more; we want analyses of a type $p$ that are {\em canonical} in the sense that up to interalgebraicity there is no other analyses of $p$ in the constants of the same (minimal) length.
Not every finite rank type, even in $\DCF$, admits a canonical analysis (see Example \ref{s4example}).
However, we show in Section \ref{sec5} that
given any sequence of positive integers $(n_1,\dots,n_m)$ there exists in $\DCF$
 a type that has a canonical analysis in the constants with $i$th step having $U$-rank $n_i$.
Unlike in the logarithmic derivative case, these examples are not differential algebraic groups, and hence
that theory is not directly available to us. Our proofs involve a careful algebraic analysis of the equations
that arise.
Note that the situation is very different for differential algebraic groups; in \cite{bell2016d} it is shown that
every differential algebraic group over the constants is analysable in at most 3 steps.

I thank Rahim Moosa, my PhD supervisor, for his advice and inputs during the writing of this paper.

\section{Analysability}\label{sprel}
\noindent
 As a general setting,
 we work in a saturated model $\mathcal U$ of a complete stable theory $T$ that eliminates imaginaries.
 We review in this section some classical notions around finite rank types.
 As a general reference we suggest \cite{pillay1996geometric}. We have provided proofs
 where explicit references were not possible.

Let $\P$ be a set of partial types (over different parameter sets) which is invariant under automorphisms
over $\varnothing$, and $q$ be a stationary type over a parameter set $A$.

Recall that a stationary type $q$ over $A$
 is \emph{$\mathcal P$-internal} (or \emph{almost $\mathcal P$-internal}) if for
some (equivalently any) realization $a$ of $q$, there exists $B \supseteq A$ which is independent from $a$ over $A$,
and $c_1,...,c_k$ realizations
 of types in $\mathcal P$ whose parameter sets are contained in $B$, such that $a\in\dcl(Bc_1...c_k)$ (or $a\in\acl(Bc_1...c_k)$).

 The type $q$ over $A$ is \emph{$\P$-analysable} if for some (equivalently any)
  realization $a$ of $q$, there are $a_1,...,a_k$
  such that $\stp(a_1/A)$ is almost $\P$-internal, $a_{i-1}\in\dcl(Aa_i)$,
$\stp(a_i/Aa_{i-1})$ is almost $\P$-internal for $i=2,3,...,k$, and
$\acl(Aa)=\acl(Aa_k)$. The sequence $(a_i)_{i=1}^k$ mentioned above
is called a \emph{$\mathcal P$-analysis of $q$} and  a \emph{$\mathcal P$-analysis of $a$ over $A$}.
For notational convenience, for any analysis $(a_i)_{i=1}^k$ we use $a_0$ to denote the empty tuple.
We call $k$ the \emph{length} of the analysis. Note that an algebraic type has a $\P$-analysis
of length zero, and an almost $\P$-internal type has a $\P$-analysis of length 1.

The \emph {$U$-type} of the analysis is the sequence $(U(a_i/Aa_{i-1}))_{i=1}^k$.
We say the analysis is \emph{nondegenerated} if each entry of the $U$-type is nonzero.

  Note that the definition of analysable here is in fact the definition of \emph{almost analysable} in the
  literature (for example, section 1 of \cite{moosa2008canonical}), and we may instead say
  that a type is \emph{strictly $\P$-analysable} if $\stp(a_i/a_{i-1})$ is internal (rather than almost internal)
  to $\P$.
  The following proposition proves that these two definitions are in fact equivalent.

\begin{proposition}\label{equiv}
  A stationary type $q$ over $A$ is $\P$-analysable iff
  it is strictly $\P$-analysable.
\end{proposition}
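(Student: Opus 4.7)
The direction that strict $\P$-analysability implies $\P$-analysability is immediate, since a $\P$-internal type is in particular almost $\P$-internal, so any strict analysis is already an analysis in the sense defined above.

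For the converse, my plan is to reduce to the base case by induction on the length of the given almost analysis. Granting that every almost $\P$-internal strong type is strictly $\P$-analysable, a general almost $\P$-analysis $(a_1,\ldots,a_k)$ of $a$ over $A$ is handled by refining each step $\stp(a_i/Aa_{i-1})$ into a strict $\P$-analysis (applying the base case over the parameter set $Aa_{i-1}$) and then concatenating these refinements. The conditions $a_{j-1}\in\dcl(Aa_j)$ and $\acl(Aa)=\acl(Aa_k)$ are preserved under this refinement-and-concatenation procedure, so the result is a bona fide strict $\P$-analysis of $a$ over $A$.

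The base case is therefore the heart of the matter: given $\stp(a/A)$ almost $\P$-internal, produce a strict $\P$-analysis. My aim is to find some $b$ with $b\in\acl(Aa)$, $a\in\acl(Ab)$, and $\stp(b/A)$ being $\P$-internal; then $(b)$ itself is a strict $\P$-analysis of $a$ over $A$ of length one. (Failing full interalgebraicity, finding $b\in\dcl(Aa)$ with $a\in\acl(Ab)$ and $\stp(b/A)$ internal would yield a strict analysis $(b,a)$ of length two, since $\stp(a/Ab)$ is algebraic and hence trivially $\P$-internal.) To construct $b$, fix a witness $B\supseteq A$ with $B\forkindep_A a$ and a tuple $\bar c$ of realizations of $\P$-types over $B$ such that $a\in\acl(AB\bar c)$. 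The finite set $X$ of $AB\bar c$-conjugates of $a$ admits, by elimination of imaginaries, a canonical real code $b_0\in\dcl(AB\bar c)$, and since $a\in X$ one has $a\in\acl(Ab_0)$. Internality of $\stp(b_0/A)$ is then obtained by transferring the witness: take a conjugate of $B$ over $A$ that is independent from $b_0$, together with the corresponding tuple of realizations of $\P$-types, which descends to exhibit $b_0$ as $\P$-internal over $A$.

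The main obstacle will be the descent step, namely ensuring that $b_0$ (or some refinement thereof) lies in $\acl(Aa)$, since the conjugates populating $X$ may \emph{a priori} depend on the choice of witness $B\bar c$ and need not themselves be algebraic over $Aa$. My plan is to handle this by an automorphism argument exploiting stationarity of $\stp(a/A)$: one shows that the orbit of $b_0$ under $\mathrm{Aut}(\mathcal U/Aa)$ is finite by arguing that the construction can be made canonical, independent of the particular witness chosen. If full descent into $\acl(Aa)$ cannot be arranged directly, the length-two fallback described above circumvents the difficulty by treating $(b_0,a)$ as the top step, and only requires that $b_0\in\dcl(Aa)$, which is a strictly weaker demand.
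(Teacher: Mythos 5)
There is a genuine gap, and you have in fact located it yourself: the descent step. Your overall architecture matches the paper's (easy direction is immediate; the converse reduces, by refining each step of a given almost analysis and concatenating, to the base case that an almost $\P$-internal strong type is interalgebraic over $A$ with a $\P$-internal one; the code of a finite set of conjugates is the right object to aim for). But the base case is never actually proved. Your $b_0$ is the code of the set $X$ of $AB\bar c$-conjugates of $a$, which lies in $\dcl(AB\bar c)$; there is no reason for $X$ to be contained in $\acl(Aa)$, and hence none for $b_0$ to be algebraic over $Aa$. Different choices of witness $B\bar c$ give genuinely different sets $X$ and different codes, so the orbit of $b_0$ under $\mathrm{Aut}(\mathcal U/Aa)$ has no a priori reason to be finite; the appeal to ``making the construction canonical'' via stationarity is exactly the content that needs proof. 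Note also that the descent is needed twice: not only for $b_0\in\acl(Aa)$, but even for the internality of $\stp(b_0/A)$, since the witnessing independence $b_0\forkindep_A B$ is obtained from $a\forkindep_A B$ precisely because $b_0\in\acl(Aa)$. Your fallback does not circumvent the problem: it requires $b_0\in\dcl(Aa)$, which is a \emph{stronger} condition than $b_0\in\acl(Aa)$, not a weaker one, and is equally unavailable for your $b_0\in\dcl(AB\bar c)$.

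The missing idea in the paper's Lemma \ref{lemequiv} is a minimality argument. One chooses the witnessing data so that the finite set $\varphi(\mathcal U,b,c)$ containing $a$ has minimal possible size $n$. If some conjugate $a_2\in\varphi(\mathcal U,b,c)$ were not in $\acl(Aa)$, one takes $a_2'b'c'\equiv_{Aa} a_2bc$ with $a_2'b'\forkindep_{Aa}a_2\dots a_nb$; then $bb'\forkindep_A a$ and $a$ satisfies $\varphi(x,b,c)\wedge\varphi(x,b',c')$, whose solution set is a strictly smaller witness (it omits $a_2$), contradicting minimality. Thus minimality forces $\varphi(\mathcal U,b,c)\subseteq\acl(Aa)$, and only then does the code $d$ of this set satisfy $d\in\acl(Aa)$, $a\in\acl(Ad)$, $d\forkindep_A b$, and $d\in\dcl(Abc)$, giving the interalgebraic $\P$-internal type. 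You should supply this intersection-with-an-independent-conjugate argument (or an equivalent one) to close the gap.
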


We need the following lemma.

\begin{lemma}\label{lemequiv}
  If a stationary type $q$ over $A$ is almost $\P$-internal, then
  for any $a\vDash q$, there exists a tuple $a_0$ such that
    $\tp(a_0/A)$ is $\P$-internal and
     $\acl(Aa)=\acl(Aa_0)$.
\end{lemma}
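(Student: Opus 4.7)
The plan is to produce $a_0$ as the canonical parameter (available thanks to elimination of imaginaries) of the finite set of realizations of $\tp(a/B\bar c)$, where $B,\bar c$ witness almost $\P$-internality of $q$. The underlying idea is that passing to a code for a finite orbit replaces ``lies in $\acl$'' by ``lies in $\dcl$'', and this is exactly the distinction between almost internal and internal.

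Concretely, I first unpack the hypothesis: pick any $a\vDash q$, and choose $B\supseteq A$ with $B\forkindep_A a$ and a tuple $\bar c=(c_1,\ldots,c_k)$ realizing types in $\P$ over $B$ such that $a\in\acl(B\bar c)$. Select a formula $\varphi(x;B,\bar c)\in\tp(a/B\bar c)$ whose finite solution set $X$ has minimum cardinality; a standard counting argument shows $X$ is precisely the set of realizations of $\tp(a/B\bar c)$. Let $a_0$ be the canonical parameter of $X$. The routine verifications are then: (i) $a_0\in\dcl(B\bar c)$, since $X$ is $B\bar c$-definable; (ii) $a_0\in\acl(a)$, since the code of a finite set is algebraic over any of its members; and (iii) $a\in\acl(a_0)$, since $a\in X$ and $X$ is $a_0$-definable. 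Facts (ii) and (iii) immediately give the desired equality $\acl(Aa)=\acl(Aa_0)$.

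The one remaining point, which I regard as the main content of the lemma, is to show that $\tp(a_0/A)$ is $\P$-internal \emph{over $A$} (not merely over $B$). Here I use (ii) together with $B\forkindep_A a$ to transfer independence and conclude $B\forkindep_A a_0$. Combined with (i), this shows that $B$ and $\bar c$ already witness $\P$-internality of $\tp(a_0/A)$: the base $B$ is independent from $a_0$ over $A$, the tuple $\bar c$ realizes $\P$-types over $B$, and crucially $a_0$ lies in the \emph{definable} closure of $B\bar c$ rather than just the algebraic closure. The subtle ingredient is exactly this upgrade from $\acl$ to $\dcl$, which is forced by our choice of $a_0$ as a canonical code.
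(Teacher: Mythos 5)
There is a genuine gap at your step (ii). The assertion that ``the code of a finite set is algebraic over any of its members'' is false: the code of $X$ is definable from the \emph{whole} set of members, but an automorphism fixing a single member $a$ can move the other members, and hence the code, through infinitely many images. Concretely, work in the constants (or in $\mathrm{ACF}_0$) with $A=\varnothing$: let $a,b$ be independent generics, $B=\varnothing$, $\bar c=(a+b,ab)$. Then $a\in\acl(\bar c)$, $B\forkindep_A a$ trivially, and the minimal-cardinality formula in $\tp(a/B\bar c)$ is $x^2-(a+b)x+ab=0$, whose solution set is $X=\{a,b\}$. Your $a_0=\ulcorner X\urcorner$ is interdefinable with $(a+b,ab)$, which is \emph{not} algebraic over $a$ since $b\notin\acl(a)$. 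So your construction does not merely lack justification; it produces the wrong $a_0$, and $\acl(Aa)\neq\acl(Aa_0)$.

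The missing idea, which is the actual content of the paper's proof, is a \emph{global} minimization followed by an intersection argument. One minimizes $n=|\varphi(\U,b,c)|$ over all triples $(\varphi,b,c)$ witnessing almost $\P$-internality (not just over formulas in the type over one fixed witness $B\bar c$), and then proves that the minimal finite set is contained in $\acl(Aa)$: if some conjugate $a_2\in\varphi(\U,b,c)$ were outside $\acl(Aa)$, one takes a copy $b'c'$ of $bc$ with $\tp(a_2'b'c'/Aa)=\tp(a_2bc/Aa)$ chosen independently over $Aa$, checks that $bb'\forkindep_A a$, and observes that $\varphi(x,b,c)\wedge\varphi(x,b',c')$ is a strictly smaller witness, contradicting minimality. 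In the example above this step replaces $\{a,b\}$ by $\{a,b\}\cap\{a,b'\}=\{a\}$. Only after this containment is established do your verifications (ii), (iii) and the independence transfer $B\forkindep_A a_0$ go through as you describe; the rest of your argument (taking the code, and noting that the code lies in $\dcl(Bc)$, upgrading almost internality to internality) matches the paper.
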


\begin{proof}
  Given any realization $a\vDash q$, let $n$ be the least number such that
  there exists an $L_A$-formula $\varphi(x,y,z)$,
  a tuple $b$ independent from $a$ over $A$ and
  a tuple $c$ realizing types in $\P$ such that
  $\vDash\varphi(a,b,c)$ and $\varphi(\mathcal U,b,c)$ is of size $n$.
  We fix these $b$, $c$, and $\varphi$ that satisfy $|\varphi(\mathcal U,b,c)|=n$.

  Step 1. We prove that $\varphi\left(\mathcal U,b,c\right)\subseteq\mathrm{acl}(Aa)$.

  Let $a=a_1,a_2,...,a_n$ be the elements of $\varphi(\mathcal U,b,c)$. Towards a contradiction,
  suppose without loss of generality
  that $a_2\not\in\mathrm{acl}(Aa)$. Then there are $a_2'$, $b'$ and $c'$ such that
  $\tp(a_2'b'c'/Aa)=\tp(a_2bc/Aa)$ and
  $a_2'b'\forkindep_{Aa} a_2...a_nb$. Since $a_2'\not\in\mathrm{acl}(Aa)$ and $a_2'\forkindep_{Aa} a_2...a_nb$,
  $a_2'\not\in\mathrm{acl}(Aaa_2...a_nb)$.
  In particular, $a_2'\neq a_i$ for $i=1,2,...,n$. Also, since $a\forkindep_A b$ and $b\forkindep_{Aa} b'$,
  we have $b\forkindep_A ab'$, and therefore $b\forkindep_{Ab'} a$.
  As $\tp(b'/Aa)=\tp(b/Aa)$ and $b\forkindep_A a$, we have $b'\forkindep_A a$, which,
  together with $b\forkindep_{Ab'} a$, yields $bb'\forkindep_A a$.
  Now the fact that $q$ is almost $\P$-internal is witnessed by
  $a\vDash \varphi(x,b,c)\wedge\varphi(x,b',c')$, and the size of
  $\varphi(\mathcal U,b,c)\wedge\varphi(\mathcal U,b',c')$ is smaller then $n$ (notice that
  $|\varphi(\mathcal U,b,c)|=|\varphi(\mathcal U,b',c')|=n$, but the two sets are not the same), contradicting
  minimality of $n$.

  Step 2. Let $d$ be the code of the set $\varphi\left(\U,b,c\right)$. Then $\tp(d/A)$
  is $\P$-internal and $\acl(Aa)=\acl(Ad)$.

  We have $a\in\acl(d)\subseteq\acl(Ad)$ by the definition of a code, and
  $d\in\dcl(aa_2...a_n)\subseteq \acl(Aa)$. Moreover, as $a\forkindep_A b$, we have $d\forkindep_A b$.
  Since $d$ is the code of $\varphi\left(\U,b,c\right)$
  where $\varphi$ is an $L_A$-formula, $d\in\mathrm{dcl}(Abc)$.
  Therefore $\tp(d/A)$ is $\P$-internal.
\end{proof}

\begin{proof}[Proof of Proposition \ref{equiv}]
  The nontrivial direction is from left to right. Suppose $(b_1,...,b_k)$ is an analysis of $a$ over $A$.
  For convenience, let $a_0$ be the empty tuple. We now construct the sequence $(a_1,...,a_k)$.

  Suppose  we already have $(a_1,...,a_{i-1})$ for $1\leq i\leq k$ such that $\stp(a_{j}/Aa_{j-1})$
  is $\P$-internal, $a_{j-1}\in\dcl(Aa_{j})$, and $\acl(Aa_j)=\acl(Ab_j)$ for $j=1,2,...,i-1$.
  Then as $\stp(b_{i}/Ab_{i-1})$ is almost $\P$-internal and $\acl(Aa_{i-1})=\acl(Ab_{i-1})$,
   we have that $\stp(b_{i}/Aa_{i-1})$ is almost $\P$-internal, so by Lemma \ref{lemequiv}, there exists $a^*$
  such that $\acl(Aa_{i-1}a^*)=\acl(Aa_{i-1}b_i)$ and $\tp(a^*/Aa_{i-1})$ is $\P$-internal.
  Let $a_i=a_{i-1}a^*$. Then we have
  $a_{i-1}\in\dcl(Aa_i)$, $\acl(Aa_i)=\acl(Aa_{i-1}b_i)=\acl(Ab_{i-1}b_i)=\acl(Ab_i)$,
  and $\tp(a_i/Aa_{i-1})$ is $\P$-internal.

  The sequence $(a_1,...,a_{k})$ then witnesses that $\tp(a/A)$ is strictly analysable.
\end{proof}

We use the following definitions in order to better
describe analysable types and their analyses. We say that the type $q$ is
 \emph{$k$-step $\mathcal P$-analysable}, or \emph{$\mathcal P$-analysable in $k$-steps},
if
the analysability of $q$ is witnessed by a $\mathcal P$-analysis of length $k$. A $\mathcal P$-analysis
$(a_i)_{i=1}^k$
 is said to be \emph{incompressible} if $\stp(a_{i+1}/Aa_{i-1})$ is not almost $\mathcal P$-internal
  for all $i=1,2,...,k-1$. A $\mathcal P$-analysis of $q$ is \emph{minimal} if there is no $\mathcal P$-analysis of $q$ of strictly shorter length.

  The following lemma shows that incompressibility implies minimality if the $U$-type
  of an analysis is $(1,1,...,1)$.

\begin{lemma}\label{lem1}
  Let $(a_1, ... , a_n)$ be an incompressible $\P$-analysis of $a$ over $A$ of
  $U$-type $\underbrace{(1,1,...,1)}_n$. Then the analysis is minimal, i.e.,
  $\tp(a/A)$ is not $\P$-analysable in $n-1$ steps.
\end{lemma}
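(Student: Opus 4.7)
The plan is to prove the lemma by induction on $n$, with the engine being an auxiliary claim identifying the almost $\mathcal{P}$-internal ``layer'' inside the analysis with $\acl(Aa_1)$. Concretely, I will establish: \emph{for any $c\in\acl(Aa_n)$ such that $\stp(c/A)$ is almost $\mathcal{P}$-internal, one has $c\in\acl(Aa_1)$.}

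For the auxiliary claim, let $k$ be the least index with $c\in\acl(Aa_k)$ and suppose toward a contradiction that $k\geq 2$. Then $c\notin\acl(Aa_{k-1})$ gives $U(c/Aa_{k-1})\geq 1$, while $c\in\acl(Aa_k)$ bounds it by $U(a_k/Aa_{k-1})=1$, so $U(c/Aa_{k-1})=1$. Rank additivity in finite $U$-rank then yields $U(ca_{k-1}/Aa_{k-2})=2=U(a_k/Aa_{k-2})$, and since $ca_{k-1}\in\acl(Aa_k)$ we get $\acl(Aa_{k-2}ca_{k-1})=\acl(Aa_k)$; in particular $a_k$ and $ca_{k-1}$ are interalgebraic over $Aa_{k-2}$. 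But $\stp(c/Aa_{k-2})$ is almost $\mathcal{P}$-internal (extensions of almost internal types are almost internal) and so is $\stp(a_{k-1}/Aa_{k-2})$ by hypothesis, so their concatenation $\stp(ca_{k-1}/Aa_{k-2})$ is almost $\mathcal{P}$-internal, and hence so is $\stp(a_k/Aa_{k-2})$. This contradicts incompressibility at index $i=k-1$, proving $k\leq 1$ and hence $c\in\acl(Aa_1)$.

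Given the auxiliary claim, the induction on $n$ runs as follows. The base case $n=1$ is immediate since $U(a_1/A)=1$ means $\tp(a_1/A)$ is not algebraic. For $n\geq 2$, suppose $(b_1,\dots,b_{n-1})$ is an $(n-1)$-step $\mathcal{P}$-analysis of $a$ over $A$. Since $\stp(b_1/A)$ is almost $\mathcal{P}$-internal and $b_1\in\acl(Ab_{n-1})=\acl(Aa_n)$, the auxiliary claim forces $b_1\in\acl(Aa_1)$. The tail $(b_2,\dots,b_{n-1})$ is then an $(n-2)$-step $\mathcal{P}$-analysis of $a_n$ over $Ab_1$, and because $\acl(Ab_1)\subseteq\acl(Aa_1)$ it is also one over $Aa_1$. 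On the other hand $(a_2,\dots,a_n)$ is an incompressible $\mathcal{P}$-analysis of $a_n$ over $Aa_1$ of length $n-1$ with $U$-type $(1,\dots,1)$---incompressibility is inherited because $a_1\in\dcl(Aa_j)$ for $j\geq 2$, collapsing $\stp(a_{j+1}/Aa_1 a_{j-1})$ to $\stp(a_{j+1}/Aa_{j-1})$. The inductive hypothesis then says $\tp(a_n/Aa_1)$ is not $(n-2)$-step analysable, the desired contradiction.

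The main obstacle is the auxiliary claim, and in particular the step that asserts closure of almost $\mathcal{P}$-internality under concatenation over a common base. A clean way to handle this is to first invoke Lemma~\ref{lemequiv} to replace the two almost $\mathcal{P}$-internal types with interalgebraic strictly $\mathcal{P}$-internal types, after which concatenating the defining data (and adjusting parameter tuples to be jointly non-forking by the standard stable forking calculus) makes the product internality transparent.
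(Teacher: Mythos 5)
Your proof is correct, but it takes a genuinely different route from the paper's. The paper proves Lemma \ref{lem1} directly by induction on $n$, interleaving the hypothetical shorter analysis $(c_1,\dots,c_{k-1})$ with $(a_1,\dots,a_k)$ to form $(a_ic_i)_i$, extracting a nondegenerate subsequence $(b_j)$, and then running a case analysis on $U(b_1/A)$ and on how $\acl(Ab_{k-2})$ compares with $\acl(Aa_{k-1})$, killing each case with the induction hypothesis. You instead isolate the statement that $a_1$ absorbs every almost $\P$-internal element of $\acl(Aa)$ over $A$ --- i.e., that $a_1$ is what Section \ref{redu} will call the $\P$-reduction of $a$ over $A$ --- deriving it from incompressibility via the rank computation $U(ca_{k-1}/Aa_{k-2})=2=U(a_k/Aa_{k-2})$, and then push the first step of any competing analysis into $\acl(Aa_1)$ so that induction applies to the tails. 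This is essentially the specialization to $U$-type $(1,\dots,1)$ of the later facts that such an analysis is an analysis by reductions (the remark after Lemma \ref{lemcr2}) and that analyses by reductions are minimal (Proposition \ref{minimal}). What your route buys is a shorter, more conceptual argument with no case analysis; the cost is reliance on the standard closure properties of almost $\P$-internality (under base extension, concatenation over a common base, and passage to interalgebraic tuples), which you correctly flag and which the paper itself uses freely elsewhere (e.g.\ in the proof of Proposition \ref{equiv}). The individual steps all check out: the minimal-$k$ argument, the use of Lascar additivity for finite ranks, and the inheritance of incompressibility by $(a_2,\dots,a_n)$ over $Aa_1$.
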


\begin{proof}
  For $n=2$, the only possibility that the analysis is not minimal is that $\stp(a/A)$ is 1-step
  $\P$-analysable,
  i.e., almost $\P$-internal, which contradicts the fact that $(a_1,a_2)$ is an incompressible analysis.

  Assume we have proved the conclusion for $n<k$.
  Suppose towards a contradiction
   that $(a_1, ... , a_k)$ is an incompressible $\P$-analysis of $a$ over $A$ of $U$-type
  $\underbrace{(1,1,...,1)}_k$ which is not minimal.
  Let $(c_1, ... , c_{k-1})$ be another $\P$-analysis of $a$ over $A$. Note that $(a_1c_1,a_2c_2,...,a_{k-1}c_{k-1})$
  is also a $\P$-analysis of $a$ over $A$. Let $b_1,...,b_\ell$ be a subsequence of $(a_ic_i)_{i=1}^{k-1}$
  such that $(b_j)_{j=1}^\ell$ is a nondegenerated $\P$-analysis of $p$. This can be done by
  taking away all elements $a_ic_i$ in $(a_ic_i)_{i=1}^{k-1}$ such that $U(a_ic_i/Aa_{i-1}c_{i-1})=0$.
  Let $b_j=a$ for $\ell+1\leq j\leq k-1$. Then the only zero entries of the $U$-type of $(b_j)_{j=1}^{k-1}$
  (if any) are
   at the end
  of the sequence.

  If $U(b_1/A)=1$, then $\acl(Ab_1)=\acl(Aa_1)$, and $\stp(a/Aa_1)=\stp(a/Ab_1)$.
  But then $(a_2,...,a_k)$ is a $k-1$-step incompressible $\P$-analysis of $a$ over $Aa_1$
   of $U$-type $\underbrace{(1,1,...,1)}_{k-1}$, while $(b_2,...,b_{k-1})$ is a $k-2$-step $\P$-analysis of
   the same type with shorter length, contradicting our induction hypothesis.

   Now suppose $U(b_1/A)\geq2$. If the $U$-type of $(b_j)_{j=1}^{k-1}$
   is degenerated, then $U(b_{k-1}/b_{k-2})=0$, and we have $U(b_{k-2}/A)=U(a/A)=k$.
     If $(b_j)_{j=1}^{k-1}$
   is nondegenerated, then $U(b_j/Ab_{j-1})\geq1$ for any $j=1,...,k-2$
   which gives us $U(b_j/A)\geq j+1$ for any $j=1,...,k-2$.
    In both cases $U(b_{k-2}/A)\geq k-1$. By the induction hypothesis,
  $\acl(Ab_{k-2})\neq\acl(Aa_{k-1})$: otherwise, $(a_i)_{i=1}^{k-1}$ is
  a $k-1$-step incompressible $\P$-analysis of $a_{k-1}$ over $A$
  of $U$-type $\underbrace{(1,1,...,1)}_{k-1}$, while $(b_i)_{i=1}^{k-2}$ is a $k-2$-step $\P$-analysis
  of the same type,
   contradicting our induction hypothesis. Similarly, $\acl(Ab_{k-2})
  \supsetneq\acl(Aa_{k-1})$ does not hold: otherwise $U(b_{k-2}/Aa_{k-1})\geq1$, and since $b_{k-2}\in\acl(Aa)$ and $U(a/Aa_{k-1})=1$, we have $\acl(Ab_{k-2})=\acl(Aa)$;
  therefore $(a_2,...,a_{k})$ is a $k-1$-step incompressible $\P$-analysis of $\stp(a/Aa_1)$
   of $U$-type $\underbrace{(1,1,...,1)}_{k-1}$, while $(b_1,...,b_{k-2})$ is a $k-2$-step $\P$-analysis of
   the same type, contradicting our induction hypothesis.
  Hence $\acl(Ab_{k-2})
  \supseteq\acl(Aa_{k-1})$ does not hold, i.e., $a_{k-1}\not\in\acl(Ab_{k-2})$. We have
  $k=U(a/A)\geq U(a_{k-1}b_{k-2}/A)=U(b_{k-2}/A)+U(a_{k-1}b_{k-2}/Ab_{k-2})\geq (k-1)+1=k$, so
  $\acl(Ab_{k-2}a_{k-1})=\acl(Aa)$. But then since
  $\stp(b_{k-2}/Aa_1)$ and $\stp(a_{k-1}/Aa_1)$ are $k-2$-step $\P$-analysable, so is
  $\stp(a/Aa_1)$, while $(a_2,...,a_k)$ is a $k-1$-step incompressible $\P$-analysis of $a$ over $Aa_1$
  of $U$-type $\underbrace{(1,1,...,1)}_{k-1}$,
   contradicting our induction hypothesis.
\end{proof}

\section{Iterated Logarithmic Derivative}\label{siter}
\noindent
Our primary interest is in $\DCF$, the theory of differential closed field of
 characteristic 0.
 The theory $\DCF$ is complete, stable, and eliminates both quantifiers and imaginaries.
 We assume some familiarity of this theory.
 The language used is $(0,1,+,\times,\delta)$, and
 $\U=(U,0,1,+,\times,\delta)$ is the saturated
 model, where $\delta$ is the derivative on the field. We often omit $0,1,+,\times$ and write $\U=(U,\delta)$.

 We focus on types which are almost $\C$-internal or $\C$-analysable in $\DCF$, where
 $\C=\{x:\delta x=0\}$ is the field of constants.

We often use the term ``generic type'' in $\DCF$. The \emph{generic type} of an irreducible Kolchin closed set $D$
over a $\delta$-field $k$ is the type which
says that $x$ is in $D$ but not in any $k$-definable Kolchin closed subset of $D$.
A definable set is \emph{irreducible} if its Kolchin closure is. By the generic type of an irreducible
definable set,
we mean the generic type of its Kolchin closure. Note that this does not always coincide with the type of greatest $U$-rank.

Recall that in $\DCF$, the logarithmic derivative of $x$ is defined as $\displaystyle\ld x=\frac {\delta x}x$.
The logarithmic derivative is used extensively in this section. Note that $\ld:\mathbb G_m\rightarrow\mathbb G_a$
is a definable group homomorphism between algebraic groups, and the kernel of the map
is $\mathbb G_m(\C)$. Here $\mathbb G_m$ is the universe (take away 0) viewed as a multiplicative group,
$\mathbb G_a$ is
the universe viewed as an additive group, and $\mathbb G_m(\C)$ is the constant points of $\mathbb G_m$.

\begin{fact}[see, for example, Fact 4.2 of \cite{chatzidakis2015differential}]\label{fact1}
  Let $G$ be the differential algebraic subgroup of $\mathbb G_m$ defined by $\{x:\delta(\ld x )=0\}$.
   The generic type of $G$ is 2-step $\C$-analysable but not almost $\C$-internal.
\end{fact}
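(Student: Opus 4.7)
The plan is to split the argument into two halves: producing an explicit two-step $\C$-analysis, and ruling out almost $\C$-internality.

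For the first half, the logarithmic derivative does all the work. Let $a$ be a generic realization of the generic type $p$ of $G$, and set $a_1 := \ld a$; by hypothesis $\delta a_1 = 0$, so $a_1 \in \C$, and since $\ld : G \to \mathbb G_a(\C)$ is a surjective homomorphism, $a_1$ is a generic of the field of constants, whence $\tp(a_1)$ is internal to $\C$. For the second step, the fibre $\ld^{-1}(a_1)$ is a coset of $\ker(\ld) = \mathbb G_m(\C)$; so if $a^*\models\tp(a/a_1)$ is chosen independent from $a$ over $a_1$, then $a/a^* \in \mathbb G_m(\C)$ and hence $a \in \dcl(a^*, \mathbb G_m(\C))$, witnessing that $\tp(a/a_1)$ is almost $\C$-internal. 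Thus $(a_1, a)$ is a $\C$-analysis of $a$, giving two-step $\C$-analysability.

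For the second half, suppose toward a contradiction that $p$ is almost $\C$-internal. Since $G$ is a commutative finite-rank differential algebraic group whose generic type is almost internal to $\C$, the standard binding-group machinery for groups in $\DCF$ gives that $G$ is isogenous over $\C$ to a commutative algebraic group $H$ defined in the constants. Over an algebraically closed field of characteristic zero, any connected 2-dimensional commutative algebraic group arising as an extension of $\mathbb G_a$ by $\mathbb G_m$ is forced to be the split one, $\mathbb G_m \times \mathbb G_a$, since $\operatorname{Ext}^1(\mathbb G_a, \mathbb G_m) = 0$ in the algebraic category. Hence $G$ would be isogenous over $\C$ to $\mathbb G_m(\C) \times \mathbb G_a(\C)$, and transporting the projection across the isogeny would produce a definable group homomorphism $s : \mathbb G_a(\C) \to G$ satisfying $\ld\circ s = \operatorname{id}$, i.e.\ a definable splitting of the defining extension.

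The crux, and the step I expect to be the main obstacle, is to show that no such section $s$ can exist. Such a section would, uniformly in the constant $b$, parametrize a nonzero solution $s(b)$ of $\delta y = b y$; writing $s(b)$ as a rational function in the constant $b$ with coefficients in $\U$ and equating coefficients in the identity $\delta(s(b)) = b\cdot s(b)$ produces a recursion of the shape $\delta(c_j) = c_{j-1}$ with $c_0 \in \C$, which in the polynomial case (combined with the vanishing of coefficients of large degree) forces every $c_j$ to be zero, contradicting $s \ne 0$; the rational case reduces to this by clearing denominators and a parallel coefficient analysis. Beyond this concrete computation, the real technical difficulty is making the passage from almost $\C$-internality of $p$ to a genuine isogeny of differential algebraic groups --- and from there to a splitting of the short exact sequence $0 \to \mathbb G_m(\C) \to G \to \mathbb G_a(\C) \to 0$ --- completely rigorous, since this is where one must invoke the nontrivial structure theory of commutative groups in $\DCF$ rather than merely the first-order internality of the generic type.
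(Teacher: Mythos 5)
First, a point of reference: the paper does not actually prove this statement -- it is imported as a known fact with a citation -- so there is no in-house argument to measure yours against. Your first half is correct and is the standard argument: $a_1=\ld a$ is itself a constant, hence $\C$-internal, and the fibre of $\ld$ over $a_1$ is a coset of $\mathbb G_m(\C)$, so $(a_1,a)$ is a two-step $\C$-analysis. Your terminal computation in the second half is also essentially right, and can be streamlined: a definable section agrees on a cofinite subset of the strongly minimal set $\C$ with a single rational function $s(b)=P(b)/Q(b)$ with coefficients in $\U$; since $\delta b=0$, the identity $\delta(s(b))=b\,s(b)$ becomes the polynomial identity $P^{\delta}Q-PQ^{\delta}=b\,PQ$ in $b$, whose right-hand side has degree $\deg P+\deg Q+1$ with nonzero leading coefficient while the left-hand side has degree at most $\deg P+\deg Q$. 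No recursion is needed, and the argument is insensitive to replacing $s$ by $s^k$ or precomposing with multiplication by an integer, so finite kernels coming from an isogeny cause no trouble.

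The gap is exactly where you locate it, and it is a genuine hole rather than a routine verification. Two inputs are black-boxed. (1) That a connected differential algebraic group whose generic type is almost $\C$-internal is definably isogenous to $H(\C)$ for some algebraic group $H$ over $\C$ is a true theorem (Pillay's foundational work on differential algebraic groups), but it needs a citation, not a gesture at ``binding-group machinery''; note also that you must first pass from almost internality of the generic type to almost internality of $G$ itself, which holds for connected $G$ because every element is a product of two generics. (2) More seriously, ``transporting the projection across the isogeny'' is not automatic: you must show both that $H$ really is an extension of $\mathbb G_a$ by $\mathbb G_m$ (not just some two-dimensional commutative group) and that the isogeny $\pi\colon\mathbb G_m(\C)\times\mathbb G_a(\C)\to G$ carries $\mathbb G_m(\C)\times\{0\}$ into $\ker(\ld)$. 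Both reduce to the observation that there are no nontrivial definable homomorphisms between $\mathbb G_m(\C)$ and groups of $\mathbb G_a$-type: for instance, $\ld\circ\pi$ restricted to $\mathbb G_m(\C)\times\{0\}$ has divisible, $\C$-internal image inside $\mathbb G_a(\U)$, and in the induced pure algebraically closed field structure on $\C$ there is no nontrivial map from a torus to a vector group, so this image is trivial. With that supplied, the induced map on $\mathbb G_a$-quotients is a surjective definable endomorphism of $\mathbb G_a(\C)$ with finite kernel, hence multiplication by a nonzero constant, and your section exists. Until (2) is written out the proof is an outline with a real hole in the middle. You could also bypass the group theory entirely: from $a\in\acl(B\C)$ with $B\forkindep a$, multiply the conjugates of $a$ over $B\C$ to obtain a nonzero $a'\in\dcl(B\C)$ with $\delta a'=k\ld(a)\,a'$ and $\ld(a)$ a generic constant over $B$, then run the same degree count on an expression of $a'$ as a rational function over $B$ of an algebraically independent tuple of constants containing $\ld(a)$.
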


It follows that any $\C$-analysis of this type is of $U$-type $(1,1)$.

We will be considering iterated logarithmic derivatives. For any $n\geq 1$ we set $\ld^{(n)}(x):=\underbrace{\ld~\ld~...~\ld(x)}_{n\text{~times}}$.
Note that $\ld^{(n)}(x)$ is only defined at $x$ if $\ld^{(i)}(x)\neq0$ for $i=0,1,...,n-1$ where $\ld^{(0)}(x)=x$.
Whenever we write $\ld^{(n)}(x)$ it is always assumed that $x$ is in this domain of definition. Note that for any $h\in\U$, the equation $\ld^{(n)}(x)=h$ defines an irreducible Kolchin constructible subset $B$ of $\U$. Indeed, $B$
 is isomorphic to
 \begin{equation*}\begin{split}
 B^*=&\{(x,\ld(x),...,\ld^{(n-1)}(x)):x\in B\}\\
 =&\{(x_1,...,x_n):x_i\neq0;\frac{\delta x_i}{x_i}=x_{i+1},i=1,2,...,n-1;\frac{\delta x_n}{x_n}=h\}
 \end{split}\end{equation*}
  whose Kolchin closure is
 $\{(x_1,...,x_n):\delta x_i=x_ix_{i+1},i=1,2,...,n-1;\delta x_n=hx_n\}$, which is irreducible since
 it is the set of $D$-points (or ``sharp'' set) corresponding to the irreducible $D$-variety
 $(\mathbb A^n,s)$ where $s(x_1,...,x_{n-1},x_n)=(x_1x_2,...,x_{n-1}x_n,hx_n)$.
 (For details on $D$-varieties see \cite{kowalskipillay}.)

 In particular, $\{x:{\ld}^{(2)}(x) =h\}$ is irreducible.
  Note also that the generic type of $\ld^{(2)}(x)=0$ is the same as that of $G$ defined in Fact \ref{fact1}.
So the following proposition is a generalisation of Fact \ref{fact1}.

\begin{proposition}\label{fprop1}
  Let $h\in U$ and consider $B=\{x:{\ld}^{(2)}(x) =h\}$.
  Let $k$ be a $\delta$-field containing $h$, and $p$ be the generic type of $B$ over $k$.
  Then $p$ is not almost $\C$-internal.
\end{proposition}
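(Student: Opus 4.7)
The proof will be by contradiction; assume $p$ is almost $\C$-internal.

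Let $a\vDash p$ and set $y_0:=\ld a$, so $\delta y_0=h y_0$ and $y_0\in H_h:=\{y:\delta y=hy\}$. Two non-zero solutions of $\delta y=hy$ always have a constant ratio, so $H_h$ is a $\mathbb G_m(\C)$-torsor; hence $\tp(y_0/k)$ is almost $\C$-internal with $U(y_0/k)=1$. The fibre $\ld^{-1}(y_0)$ is also a $\mathbb G_m(\C)$-torsor, so $\tp(a/k\langle y_0\rangle)$ is $\C$-internal of $U$-rank $1$. Thus $U(a/k)=2$.

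I now set up independent auxiliary data. Choose $y_*\in H_h$ with $y_*\forkindep_k a$, and $a_*$ with $\ld a_*=y_*$ and $a_*\forkindep_{k\langle y_*\rangle} a$; so $a\forkindep_k(y_*,a_*)$. Let $c:=y_0/y_*\in\mathbb G_m(\C)$. Then $c\in\dcl(a,y_*)$, and $c$ is generic in $\mathbb G_m(\C)$ over $k$ with $c\forkindep_k(y_*,a_*)$; a direct $U$-rank computation yields $U(y_*,a_*,c/k)=3$ and $U(a,y_*,a_*,c/k)=4$. The key intermediate point is the constant-field claim: the algebraic closure of $\C_k(c)$ inside $\C$ is the full constant field of $k\langle a,y_*,a_*\rangle$. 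This follows from a Picard--Vessiot-style no-new-constants argument applied along the tower $k\subseteq k\langle y_*\rangle\subseteq k\langle y_*,a_*\rangle\subseteq k\langle y_*,a_*,c\rangle\subseteq k\langle y_*,a_*,c,a\rangle$, since every step apart from the introduction of $c$ is generated by a homogeneous linear differential equation over the previous field.

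From the internality assumption, there exist $B\supseteq k$ with $B\forkindep_k a$ and $\bar c'\in\C$ so that $a\in\acl(B,\bar c')$; by choosing $y_*,a_*$ independent from $B$ over $k$ as well, we may assume $y_*,a_*\in B$. Using Lemma~\ref{lemequiv} to replace $a$ by an interalgebraic element with genuinely $\C$-internal type over $B$, we may arrange $\bar c'\in\dcl(B,a)\cap\C$; the constant-field claim then forces $\bar c'\in\acl(B,c)$, so that $a\in\acl(B,c)$. Writing $B=k\langle y_*,a_*,B'\rangle$ with $B'\forkindep_{k\langle y_*,a_*\rangle}a$, and using $c\in\dcl(a,y_*)$, forking calculus yields $B'\forkindep_{k\langle y_*,a_*,c\rangle}a$, so $B'$ may be absorbed to give $a\in\acl(k,y_*,a_*,c)$. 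But the ranks computed above give $U(a/k\langle y_*,a_*,c\rangle)=1$, contradicting $a\in\acl(k,y_*,a_*,c)$.

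The principal technical obstacle is the Picard--Vessiot-style constant-field computation together with its use (via Lemma~\ref{lemequiv}) in trapping the internality witness $\bar c'$ inside $\acl(B,c)$; once these are established, the final rank arithmetic is immediate.
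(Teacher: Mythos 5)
There is a genuine gap, and it sits exactly where you flag ``the principal technical obstacle'': the constant-field claim. First, as stated it is false for some admissible $h$. Take $h=0$, or more generally $h=\ld g$ for some $g\in k$: then $y_0/g$ and $y_*/g$ are constants lying in $k\langle a,y_*,a_*\rangle$, and since $y_*\forkindep_k c$ and $y_*\notin\acl(k)$, the element $y_*/g$ is not algebraic over $\C_k(c)\subseteq k(c)$. (The case $h=0$ is not degenerate for the proposition --- it is exactly Fact~\ref{fact1}.) Second, the justification offered does not apply: the Picard--Vessiot no-new-constants theorem concerns extensions generated by a full fundamental system inside a prescribed Picard--Vessiot extension, not the differential field generated by a single generic solution of a homogeneous linear equation; such an extension can perfectly well introduce new constants, as the same example shows. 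Third, even where the claim holds it is about the wrong field: the internality witness $B$ contains an arbitrary $B'$ on top of $k\langle y_*,a_*\rangle$, and your $\bar c'$ lives in $\dcl(B,a)\cap\C$, so what you actually need is $\acl(Ba)\cap\C\subseteq\acl(Bc)$; a statement about $k\langle a,y_*,a_*\rangle$ gives no control over that. Worse, this stronger statement is essentially equivalent to the proposition: if $\tp(a/B)$ is almost $\C$-internal then a rank count shows $a\in\acl(B,c,d)$ for some constant $d\in\acl(Ba)\setminus\acl(Bc)$, i.e.\ new constants appear in $\acl(Ba)$ beyond $\acl(Bc)$ precisely when internality holds. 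So under your reductio hypothesis the claim you invoke is exactly what fails, and the argument is circular at its core.

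The surrounding scaffolding is fine --- the rank computations, the use of Lemma~\ref{lemequiv} to upgrade to genuine internality, the absorption of $B'$ by transitivity of forking, and the final contradiction with $U(a/k\langle y_*,a_*,c\rangle)=1$ --- but the missing content is a real proof that adjoining $a$ over $\acl(Bc)$ creates no new constants, and that is the whole theorem. For comparison, the paper avoids the issue entirely: after arranging that $k$ contains $\ld g_0$ for some $g_0\in B$, it rescales the derivation to $D=\delta/\ld g_0$, notes that $(U,D)$ is again a model of $\DCF$ with the same constants and the same $k$-definable sets, and observes that $B$ becomes $\{x:\log D(\log D\,x)=0\}$, so non-internality follows from the known $h=0$ case. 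If you want a direct argument along your lines, you would need the kind of explicit fewest-terms polynomial computations the paper carries out in Lemmas~\ref{lclaim3} and~\ref{lclaim4}, not an appeal to Picard--Vessiot theory.
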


\begin{proof}
  We may assume that $k$ contains an element of the form $a=\ld g_0$  where $g_0\in B$. Indeed, this follows
  from the fact that for any $g_0\in B$, $p$ is almost $\C$-internal iff the non-forking extension of $p$ to
   $k\langle g_0\rangle$ is, and $p|k\langle g_0\rangle$  is the generic type of $B$ over $k\langle g_0\rangle$.

  We now construct a new model $\mathcal V=(U,D)$ of $\DCF$ as follows.
  The set $U$ and the interpretation of $0,1,+$ and $\times$ remain the same, while
  $\displaystyle Dg:=\frac{\delta g}{a}$ for all $g\in\U$.
  Notice that $\mathcal V$ is also a model of $\DCF$ with the same field of constants as $\U$, and any definable set
  in one model is definable in the other, with the same set of parameters, as long as the parameter set contains $a$.
  Now let $q$ be a type in the model $\mathcal V$ over $k$ so that $q$ and $p$ have the same set of realizations
  in $U$.
  This can be done by replacing each occurrence of $\delta$ in formulas in $p$ by $aD$.

  Assume towards a contradiction that $p$ is almost $\C$-internal. Hence, for any $g\models p$, there is $B\supset k$
  such that $g\forkindep_{k} B$ and $g\in \acl(BC)$, in the model $\U$.
  Replacing $\delta$ by $aD$ in the formulas witnessing this fact, we have that $g\in\acl(BC)$ in $\mathcal V$ as well.
  Moreover, $g\forkindep_{k} B$ holds in $\mathcal V$ because $U$-ranks of types are the same in $\U$ and
  $\mathcal V$ if the parameter set contains $a$.
  We get that $q$ is almost $\C$-internal in $\mathcal V$.

  However, $q$ is the generic type of $B$, since Kolchin closed sets definable over $k$
   (which contains $a$) are the same in $\U$ and $\mathcal V$. The set $B$ is defined in $\U$ by the formula
   ${\ld}({\ld}x) =h$, which is just ${a\log D}({\log D}x) =h$, which is equivalent to $\log D({\log D}f) =0$.
So $q$ is the generic type of $B=\{x:\log D({\log D}x) =0\}$, which is not almost $\C$-internal in $\mathcal V$
by Fact \ref{fact1}, a contradiction.
\end{proof}

We can now show that the iterated logarithmic derivatives give rise to $n$-step $\C$-analysable types that are not
$n-1$-step $\C$-analysable.

\begin{corollary}\label{iter}
  In $\DCF$, let $D=\{x\in U : \underbrace{\ld~ \ld ~... ~\ld}_n x=0\}$.
  Then the generic type $p$ of $D$ is $n$-step $\C$-analysable
  but not $n-1$-step $\C$-analysable.
\end{corollary}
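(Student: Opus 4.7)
The plan is to exhibit an explicit $\C$-analysis of $p$ of length $n$ with $U$-type $(1,1,\ldots,1)$ and to show it is incompressible; Lemma \ref{lem1} will then deliver minimality, i.e., non-$(n-1)$-step $\C$-analysability.

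For the existence direction, let $x\models p$ over $A=k$ and set
$$a_i := \bigl(\ld^{(n-1)}(x), \ld^{(n-2)}(x), \ldots, \ld^{(n-i)}(x)\bigr)$$
for $i=1,\dots,n$. Then $a_1 = \ld^{(n-1)}(x)\in\C$, so $\stp(a_1/A)$ is $\C$-internal. For $i\geq 2$, the new coordinate $\ld^{(n-i)}(x)$ satisfies $\ld(\ld^{(n-i)}(x)) = \ld^{(n-i+1)}(x)\in\dcl(Aa_{i-1})$; the fiber of $\ld$ over a fixed value being a $\mathbb G_m(\C)$-torsor, $\stp(a_i/Aa_{i-1})$ is almost $\C$-internal of $U$-rank $1$. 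Clearly $a_{i-1}\in\dcl(a_i)$ and $\acl(Aa_n)=\acl(Ax)$, so $(a_1,\dots,a_n)$ is a $\C$-analysis of the required $U$-type.

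For incompressibility, fix $1\leq i\leq n-1$. The point is that $a_{i+1}$ is interalgebraic over $Aa_{i-1}$ with $y := \ld^{(n-i-1)}(x)$ (since $\ld^{(n-i)}(x)=\ld(y)$ and $a_{i-1}$ is determined by $y$), so it suffices to show that $\tp(y/k\langle a_{i-1}\rangle)$ is not almost $\C$-internal. Set $h := \ld^{(n-i+1)}(x) = \ld^{(2)}(y)\in k\langle a_{i-1}\rangle$. A quick $U$-rank count---using $U(\ld^{(j)}(x)/k)=n-j$ for $x$ generic in $D$, and that $a_{i-1}$ is interdefinable over $k$ with its last entry $\ld^{(n-i+1)}(x)$---gives $U(y/k\langle a_{i-1}\rangle)=(n-(n-i-1))-(i-1)=2$, which coincides with the $U$-rank of the Kolchin closed set $\{y:\ld^{(2)}(y)=h\}$. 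Hence $y$ is generic in that set over $k\langle a_{i-1}\rangle$, and Proposition \ref{fprop1} rules out almost $\C$-internality. Thus the analysis is incompressible, and Lemma \ref{lem1} concludes.

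The only conceptual step is noticing that each consecutive pair in our analysis reduces, modulo the preceding element, to precisely the situation covered by Proposition \ref{fprop1}; the accompanying $U$-rank bookkeeping---verifying that $y$ is actually generic rather than merely lying in the relevant set---is the only technical point, and it follows from the standard computation that $\ld^{(m)}(x)=0$ has $U$-rank $m$.
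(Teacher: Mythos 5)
Your proof is correct and follows essentially the same route as the paper: the same analysis $(\ld^{(n-1)}(x),\dots,\ld^{(n-i)}(x))_i$ (interdefinable with the paper's $a_i=\ld^{(n-i)}(x)$), incompressibility via Proposition \ref{fprop1} applied to the fibers of $\ld^{(2)}$, and minimality via Lemma \ref{lem1}. The only difference is that you justify genericity of $y$ in the fiber by a $U$-rank count where the paper asserts it directly; that is a valid and slightly more explicit version of the same step.
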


\begin{proof}
  Let $a\in D$ be generic. Let $a_n=a$, $a_k=\ld a_{k+1}$ for $k=0,1,...,n-1$.
  Note that $a_0=0$, $a_k\in\dcl(a_{k+1})$ for $k=0,1,...,n-1$, and $a$ is interdefinable with $(a_1,...,a_n)$.

  As $a$ is generic in $D$, $a_{i+1}\not\in\acl(a_i)$ for each $i=0,1,...,n-1$.
  By additivity of $U$-rank, for each $i=0,1,...,n-1$, $U(a_{i+1}/a_i)=1$. Hence, $\stp (a_{i+1}/a_i)$
  is the generic type over $a_i$ of $\ld(x)=a_i$.  The latter equation defines a multiplicative translation of
  $\mathbb G_m(\C)=\ker(\ld)$, so $\stp(a_{i+1}/a_i)$ is almost $\C$-internal of $U$-rank 1.
  That is, $(a_1,a_2,...,a_n=a)$ is a $\C$-analysis of $p$ of $U$-type $\underbrace{(1,1,...,1)}_n$.

  For each $i=1,2,...,n-1$, $\stp(a_{i+1}/a_{i-1})$ is the generic type of $\ld^{(2)}x=a_{i-1}$ over $a_{i-1}$.
  Proposition \ref{fprop1} tells us that this type is not almost $\C$-internal. That is, $(a_1,a_2,...,a_n)$
  is an incompressible $\C$-analysis.

  Hence, by Lemma \ref{lem1}, $p$ is not $\C$-analysable in $n-1$ steps.
\end{proof}

\section{Analyses by reductions and coreductions}\label{redu}
\noindent
In this section we return to the general setting of Section \ref{sprel}; so $T$ is a complete stable theory that
eliminates imaginaries, $\U$ is a sufficiently saturated model of $T$, and $\P$ is a set of partial types invariant
over automorphisms of the universe.

Note that Lemma \ref{lem1} does not hold if the entries of the $U$-type are not
 all 1.

\begin{example}\label{s4example}
  Let $\stp(a)$ be
  2-step $\P$-analysable with an incompressible $\P$-analysis $(a_1,a)$.
  Now let $(b_1,b)$ be such that $bb_1\forkindep aa_1$ and $\stp(bb_1)=\stp(aa_1)$.
  Let $c=ab$. Then $c$ is 3-step $\P$-analysable, with an analysis $(a_1, ab_1, c=ab)$.
  This analysis is incompressible: $\stp(ab_1)$ is not almost $\P$-internal because $\stp(a)$ is not almost
  $\P$-internal
  and $\stp(ab/a_1)$ is not almost $\P$-internal because $\stp(b)$ is not almost $\P$-internal, and $\stp(b/a_1)$ is its
  non-forking extension.
   But $c$ is 2-step $\P$-analysable by $(a_1b_1, c=ab)$, so the $\P$-analysis $(a_1, ab_1, c=ab)$ is
   not minimal despite being incompressible.
\end{example}

  To generalize Lemma \ref{lem1} to higher $U$-rank cases, we need each step to satisfy some maximality or
  minimality property. We will use the notions of $\P$-reduction and $\P$-coreduction.

\begin{definition}[See, for example, Section 4 of \cite{moosa2010model}]
  Suppose $a$ is a tuple and $A$ is a parameter set.
  We say a tuple $b$ is a \emph{$\P$-reduction of $a$ over $A$} if $b$ is maximally
  almost $\P$-internal over $A$ in $\acl(Aa)$,
  i.e., $\stp(b/A)$ is almost $\P$-internal,
  $b\in\acl(Aa)$, and if $b'\in \acl(Aa)$ and $\stp(b'/A)$ is almost $\P$-internal
  then $b'\in\acl(Ab)$. We say a nondegenerated $\P$-analysis $(a_1,...,a_n)$
  of $a$ over $A$ is a \emph{$\P$-analysis by reductions of $a$ over $A$} if
  $a_k$ is the $\P$-reduction of $a$ over $Aa_{k-1}$ for $k=1,2,...,n$.
\end{definition}

Note that by definition $\P$-reductions are unique up to interalgebraicity over the parameter set, i.e.,
if $b$ and $c$ are both $\P$-reductions of $a$ over $A$, then $\acl(Ab)=\acl(Ac)$.
We may therefore call $b$ \emph{the} $\P$-reduction of $a$ over $A$.

\begin{remark}
It is clear that if $U(a/A)<\omega$, then a $\P$-reduction of $a$ over $A$ always exists. In fact,
  let $b$ be a tuple that has maximal $U$-rank over $A$ satisfying
  the condition that $\stp(b/A)$ is almost $\P$-internal and $b\in\acl(Aa)$. Then $b$ is a $\P$-reduction
  of $a$ over $A$: if $c$ also satisfies this condition, then $\stp(bc/A)$ is
  almost $\P$-internal and $bc\in\acl(Aa)$,
  so $U(bc/A)=U(b/A)$, which means $c\in\acl(Ab)$. Hence, if $\tp(a/A)$ is
  $\P$-analysable of finite $U$-rank then a $\P$-analysis by reductions always exists.
\end{remark}

\begin{definition}[See, for example, Definition  4.1 of \cite{moosa2010model}]
  Suppose $a$ is a tuple and $A$ is a parameter set.
  We say a tuple $b$ is a \emph{$\P$-coreduction of $a$ over $A$} if $b$ is minimal in $\acl(Aa)$ such that $a$ is
  almost $\P$-internal over $Ab$,
  i.e., $\stp(a/Ab)$ is almost $\P$-internal,
  $b\in\acl(Aa)$, and if $b'\in \acl(aA)$ and
  $b'$ satisfies that $\stp(a/Ab')$ is almost $\P$-internal
  then $b\in\acl(Ab')$. We say a nondegenerated $\P$-analysis $(a_1,...,a_n)$ of $a$ over $A$
  is a \emph{$\P$-analysis by coreductions of $a$ over $A$} if
  $a_{k-1}$ is a $\P$-coreduction of $a_k$ over $A$ for $k=2,...,n$.
\end{definition}

Note similarly that by definition $\P$-coreductions are unique up to interalgebraicity over the parameter set.
We may therefore call $b$ \emph{the} $\P$-coreduction of $a$ over $A$.

Recall that $T$ has the \emph{canonical base property} (CBP) if whenever $U(a/b)<\omega$ and
  $\acl(b)=\acl(\mathrm{Cb}(a/b))$, then $\stp(b/a)$ is almost $\mathbb P$-internal, where $\mathbb P$
  is the set of all nonmodular minimal types.
  See, for example, Section 1 of \cite{moosa2008canonical}.
  It is a fact that if $T$ has CBP then $\mathbb P$-coreductions
  exist for any finite-rank type (see Theorem 2.4 of \cite{chatzidakis2012note}). Hence, assuming $T$ has CBP,
  if $\stp(a/A)$ is $\mathbb P$-analysable of finite $U$-rank
  then a $\mathbb P$-analysis by coreductions always exists.

The following lemma shows that in $\DCF$£¬ $\C$-coreductions of any finite-rank type always exist.
This is because any nonmodular minimal type in $\DCF$ is almost $\C$-internal.

\begin{lemma}\label{dcf}
  We work in $\DCF$ in this lemma. If  $U(a/A)$ is finite, then the $\C$-coreduction of $a$ over $A$ exists.
\end{lemma}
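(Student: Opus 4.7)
The plan is to reduce the lemma to the existence of $\mathbb P$-coreductions for finite-rank types -- noted as a consequence of CBP just before the statement -- and then to observe that in $\DCF$ the $\mathbb P$- and $\C$-coreductions coincide, so that the former serves as the latter.

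Since $\DCF$ satisfies the canonical base property (Pillay--Ziegler), the preceding remark guarantees that, given $U(a/A)<\omega$, a $\mathbb P$-coreduction $b$ of $a$ over $A$ exists, where $\mathbb P$ denotes the family of all nonmodular minimal types. The next step is to verify that, for finite-$U$-rank stationary types in $\DCF$, almost $\mathbb P$-internality and almost $\C$-internality are the same condition. One direction is immediate: the generic type of $\C$ is itself a nonmodular minimal type and so belongs to $\mathbb P$, forcing every almost $\C$-internal type to be almost $\mathbb P$-internal. For the other direction I would cite the classical consequence of Hrushovski's trichotomy theorem for $\DCF$, namely that every nonmodular minimal type in $\DCF$ is almost $\C$-internal. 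Combined with transitivity of almost-internality, this converts any almost $\mathbb P$-internal finite-rank type into an almost $\C$-internal one.

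With this equivalence, the defining conditions for the $\mathbb P$-coreduction and the $\C$-coreduction of $a$ over $A$ translate into each other term by term: the internality requirement on $\stp(a/Ab)$ and the minimality clause quantifying over $b'\in\acl(Aa)$ with $\stp(a/Ab')$ the relevant kind of internal single out the same tuples. Hence the $b$ produced in the first step is simultaneously a $\C$-coreduction of $a$ over $A$.

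I expect the main obstacle to be invoking, rather than proving, the two nontrivial external inputs -- the CBP of $\DCF$ and the almost $\C$-internality of every nonmodular minimal type -- since once both are available the rest of the argument is essentially a chase through the definitions.
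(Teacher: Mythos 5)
Your proposal is correct and follows essentially the same route as the paper: invoke the Pillay--Ziegler CBP for $\DCF$ to obtain a $\mathbb P$-coreduction, then use the fact that every nonmodular minimal type in $\DCF$ is almost $\C$-internal (plus transitivity of almost internality) to identify it with the $\C$-coreduction. The paper only spells out the direction ``almost $\mathbb P$-internal implies almost $\C$-internal,'' treating the converse as immediate, so your explicit handling of both directions is, if anything, slightly more complete.
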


\begin{proof}
  Let $\mathbb P$ be the set of all nonmodular minimal types in $\U\models \DCF$.
  By Theorem 1.1 of \cite{pillay2003jet}, $\DCF$ has CBP. Therefore, there exists $b$ which is
  the $\mathbb P$-coreduction of $a$ over $A$.

  We want to show that $b$  is the $\C$-coreduction of $a$ over $A$. In fact, we only need to show that
  if a type is almost $\mathbb P$-internal then it is almost $\C$-internal. Suppose $\tp(e/D)$ is $\mathbb P$-internal.
  Then for some $B\supset D$ such that $B\underset{D}\forkindep e$ and a tuple $c$ consists of realizations of types in $\mathbb P$ with bases in $B$,
  $e\in\acl(Bc)$. Since every minimal nonmodular type in $\DCF$ is almost $\C$-internal,
  there exist $F\supset B$ such that $F\underset{B}\forkindep ec$ and $c\in\acl(F\C)$. Now
  $e\in\acl(Bc)\subseteq\acl(F\C)$,
  and since $e\underset{B}\forkindep F$ and $e\underset{D}\forkindep B$, we have $e\underset{D}\forkindep F$.
  This shows that $\tp(e/D)$ is almost $\C$-internal.
\end{proof}

It is not hard to see that analyses by reductions or coreductions are incompressible.
If $(a_1,...,a_n)$ is a $\P$-analysis by reductions of $\tp(a/A)$
and $\stp(a_{i+1}/Aa_{i-1})$ is almost $\P$-internal for some $i=1,2,...,n-1$, then since $a_i$ is the $\P$-reduction of $a$ over $Aa_{i-1}$, $a_{i+1}\in\acl(Aa_i)$ which
implies $\acl(Aa_i)=\acl(Aa_{i+1})$.
Now for any $j>i$, assume that $\acl(Aa_j)=\acl(Aa_i)$.
Then since $a_{j+1}$ is the $\P$-reduction of $a$ over $Aa_j$ and $\acl(Aa_j)=\acl(Aa_i)$,
$a_{j+1}$ is the $\P$-reduction of $a$ over $Aa_i$, so $\acl(Aa_{j+1})=\acl(Aa_{i+1})=\acl(Aa_i)$.
Thus $a_i,...,a_n$ are all the same up to
 interalgebraicity over $A$, and this is possible only if $i=n$, contradicting the fact that $i\leq n-1$.
Similarly, if $(a_1,...,a_n)$ is a $\P$-analysis by coreductions of $\tp(a/A)$
and $\stp(a_{i+1}/Aa_{i-1})$ is almost $\P$-internal for some $i=1,2,...,n-1$, then since $a_i$
is the $\P$-coreduction of $a_{i+1}$ over $a_{i-1}$, $a_{i}\in\acl(Aa_{i-1})$ which
implies $a_i$ and $a_{i-1}$ are interalgebraic over $A$. An
inductive argument similar to the reduction case shows that $a_0,...,a_i$ are all the same up to
 interalgebraicity over $A$, and this is possible only if $i=0$, contradicting the fact that $i\geq 1$.

However, more is true: they are actually minimal.

\begin{proposition}\label{minimal}
  Analysis by reductions and coreductions are minimal.
\end{proposition}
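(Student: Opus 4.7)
My plan is to handle the reduction and coreduction cases separately, each by induction.

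For the \emph{reduction case}, let $(a_1, \ldots, a_n)$ be a $\P$-analysis by reductions of $a$ over $A$, and let $(b_1, \ldots, b_m)$ be any $\P$-analysis of $a$ over $A$. I would prove by induction on $i$ (with $a_0 = b_0 = \emptyset$) that $b_i \in \acl(Aa_i)$ for all $0 \leq i \leq m$. For the inductive step, $\stp(b_{i+1}/Ab_i)$ is almost $\P$-internal, and since $b_i \in \acl(Aa_i)$, enlarging the base of parameters (which preserves almost $\P$-internality) gives that $\stp(b_{i+1}/Aa_i)$ is almost $\P$-internal. Combined with $b_{i+1} \in \acl(Aa)$ --- which holds because $b_{i+1} \in \dcl(Ab_m)$ and $\acl(Ab_m) = \acl(Aa)$ --- the maximality property of $a_{i+1}$ as the $\P$-reduction of $a$ over $Aa_i$ forces $b_{i+1} \in \acl(Aa_{i+1})$. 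Taking $i = m$ then yields $a \in \acl(Ab_m) \subseteq \acl(Aa_m)$; but if $m < n$, nondegeneracy of the reduction analysis gives $U(a_n/Aa_m) \geq n - m \geq 1$, so $a_n \notin \acl(Aa_m)$, contradicting $a_n \in \acl(Aa) \subseteq \acl(Aa_m)$.

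For the \emph{coreduction case}, I would argue by induction on $n$. The base case $n \leq 1$ is immediate from nondegeneracy, which forces $U(a/A) \geq 1$ and thus precludes a length-$0$ analysis. For the inductive step, let $(b_1, \ldots, b_m)$ be any $\P$-analysis of $a$ over $A$ with $m \geq 1$. Since $a$ and $b_m$ are interalgebraic over $A$ and $\stp(b_m/Ab_{m-1})$ is almost $\P$-internal, $\stp(a/Ab_{m-1})$ is almost $\P$-internal (almost $\P$-internality is inherited via the algebraic closure). Minimality of the coreduction $a_{n-1}$ of $a$ over $A$ then forces $a_{n-1} \in \acl(Ab_{m-1})$. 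Now $(b_1, \ldots, b_{m-1})$ is a $\P$-analysis of $b_{m-1}$ of length $m - 1$, and by the standard fact that $\P$-analysability descends to elements of the algebraic closure, $\tp(a_{n-1}/A)$ admits a $\P$-analysis of length at most $m - 1$. But $(a_1, \ldots, a_{n-1})$ is itself a coreduction analysis of $a_{n-1}$ of length $n - 1$, so the inductive hypothesis gives $m - 1 \geq n - 1$, i.e., $m \geq n$.

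The main obstacle is justifying the descent fact: if $\tp(c/A)$ is $\P$-analysable in $k$ steps and $d \in \acl(Ac)$, then $\tp(d/A)$ is $\P$-analysable in at most $k$ steps. Using elimination of imaginaries one would take $d_i$ coding $\acl(Ac_i) \cap \acl(Ad)$, where $(c_1, \ldots, c_k)$ is an analysis of $c$; the delicate point is checking that $\stp(d_i/Ad_{i-1})$ remains almost $\P$-internal, since shrinking the base of parameters does not in general preserve almost $\P$-internality. I expect this to be resolvable through the canonical base of the almost $\P$-internal type at each step, or else by invoking the descent property as a known result in the stability-theoretic literature.
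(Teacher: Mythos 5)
Your reduction half is essentially the paper's own argument: the upward induction showing $b_i\in\acl(Aa_i)$, using that $\stp(b_{i+1}/Aa_i)$ stays almost $\P$-internal and that $a_{i+1}$ is the $\P$-reduction of $a$ over $Aa_i$, followed by the length comparison via nondegeneracy, is correct and matches the paper (which closes with incompressibility rather than nondegeneracy, an immaterial difference).

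The coreduction half, however, has a genuine gap, and it is exactly the one you flag: the claim that if $d\in\acl(Ac)$ and $\tp(c/A)$ is $\P$-analysable in $k$ steps then $\tp(d/A)$ is $\P$-analysable in at most $k$ steps. Plain analysability and almost internality do descend to $\acl(Ac)$, but the \emph{step count} does not obviously do so: your proposed witness $d_i$ coding $\acl(Ac_i)\cap\acl(Ad)$ is almost $\P$-internal over $Ac_{i-1}$, and passing to the smaller base $Ad_{i-1}$ is precisely the step that fails in general, as you note. None of your suggested repairs is carried out, and the descent statement is of roughly the same depth as the proposition you are proving (indeed, combined with your reduction half it would nearly imply the result), so it cannot be waved through as "standard." The fix is to drop the induction on $n$ and the descent fact altogether and instead iterate the one correct step you already have, running the induction downward from the top as a mirror image of the reduction case: show $a_{n-j}\in\acl(Ab_{m-j})$ for $j=0,1,\dots,\min(m,n)-1$. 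The base case is that $a_n$ and $b_m$ are both interalgebraic with $a$ over $A$; for the inductive step, $a_{n-j+1}\in\acl(Ab_{m-j+1})$ gives that $\stp(a_{n-j+1}/Ab_{m-j})$ is almost $\P$-internal, so the coreduction property of $a_{n-j}$ (as the $\P$-coreduction of $a_{n-j+1}$ over $A$) yields $a_{n-j}\in\acl(Ab_{m-j})$. If $m<n$, then $a_{n-m+1}\in\acl(Ab_1)$ is almost $\P$-internal over $A$, while $n-m+1\geq 2$ and incompressibility of coreduction analyses (established just before the proposition) say that $\stp(a_{n-m+1}/A)$ is not almost $\P$-internal --- a contradiction. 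This is the paper's argument, and it avoids descent entirely.
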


\begin{proof}
  Let $(a_1,...,a_n)$ and $(c_1,...,c_\ell)$ be $\P$-analyses of $a$ over $A$
  with $(a_1,...,a_n)$ being by reductions. We shall prove that $n\leq \ell$.
  We show that $c_i\in\acl(Aa_i)$ for $i=1,2,...,\min(n,\ell)$. For $i=1$, since $\stp(c_1/A)$ is
  almost $\P$-internal and $a_1$ is the $\P$-reduction of $a$ over $A$, $c_1\in\acl(Aa_1)$. Now if $c_{i-1}\in\acl(Aa_{i-1})$, then $\stp(c_i/a_{i-1})$ is almost $\P$-internal, and as
  $a_i$ is the $\P$-reduction of $a$ over $Aa_{i-1}$, $c_i\in\acl(Aa_i)$ as desired.
  Suppose $\ell<n$. Then $\acl(Aa_{\ell})\subsetneq \acl(Aa_n) $ since $(a_1,...,a_n)$ is incompressible, so
   $\acl(Aa)=\acl(Ac_{\ell})\subseteq \acl(Aa_{\ell})\subsetneq \acl(Aa_n)=\acl(Aa)$,
   a contradiction.

  Now suppose $(b_1,...,b_m)$ is a $\P$-analysis by coreductions of $a$ over $A$.
  We shall prove that $m\leq\ell$.
  We show that $b_{m-j}\in\acl(Ac_{\ell-j})$ for $j=0,1,...,\min(m,\ell)-1$.
  For $j=0$, notice that $b_m,c_\ell$ are both interalgebraic over $A$ with $a$. Now if
  $b_{m-j+1}\in\acl(Ac_{\ell-j+1})$, then $\stp(b_{m-j+1}/c_{\ell-j})$ is almost $\P$-internal, and as
  $b_{m-j}$ is the $\P$-coreduction of $b_{m-j+1}$ over $A$, $b_{m-j}\in\acl(Ac_{\ell-j})$ as desired.
  Assume towards a contradiction that  $\ell<m$. Then
   $\acl(Ab_{m-\ell+1})\subseteq \acl(Ac_1)$. Since $m-\ell+1\geq 2$, $\stp(b_{m-\ell+1}/A)$
   is not almost $\P$-internal because $(b_1,...,b_m)$ is incompressible, but $\stp(c_1/A)$ is almost $\P$-internal,
   a contradiction.
\end{proof}

So analyses by reductions and coreductions are of the same length.
However, analyses by reductions and coreductions do not always have to agree (even up to interalgebraicity).

\begin{definition}
  We say that two $\P$-analyses $(a_1,...,a_n)$ and $(b_1,...,b_m)$ of $a$ over $A$ are \emph{interalgebraic
  over $A$} if $n=m$ and $\acl(Aa_i)=\acl(Ab_i)$ for $i=1,2,...,n$.
  We call an analysis \emph{canonical} if it is minimal and interalgebraic with every other minimal analysis.
\end{definition}

\begin{example}
Using the notation of Example \ref{s4example}, the $\P$-analysis by reductions of $ab_1$
over $\varnothing$ is $(a_1b_1,ab_1)$, while the $\P$-analysis by coreductions of $ab_1$ is $(a_1,ab_1)$.
But $(a_1b_1,ab_1)$ and $(a_1,ab_1)$ are not interalgebraic. In particular, $\stp(ab_1)$ does
not have a canonical $\P$-analysis.
\end{example}

The next proposition points out, however, that if an analysis by reductions has the same $U$-type as one by coreductions, then they are interalgebraic and are in fact the unique minimal analysis up to interalgebraicity.

\begin{proposition}\label{unique}
  Let $(a_1,...,a_n)$ and $(b_1,...,b_n)$ be $\P$-analyses by reductions and coreductions of $a$ over $A$,
  respectively. If
  the $U$-types of $(a_1,...,a_n)$ and $(b_1,...,b_n)$
  are the same, then $(a_1,...,a_n)$ is interalgebraic with $(b_1,...,b_n)$ over $A$. Moreover,
   if $(c_1,...,c_n)$
  is another $\P$-analysis of $a$ over $A$, then $(c_1,...,c_n)$ is also interalgebraic with both
  $(a_1,...,a_n)$ and $(b_1,...,b_n)$ over $A$.

  In particular, if $p$ has an analysis by reductions and an analysis by coreductions of the same
  $U$-type, then these analyses are canonical. Conversely, any canonical analysis is an analysis
  by both reductions and coreductions.
\end{proposition}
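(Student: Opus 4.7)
The plan is to recycle the two containments extracted in the proof of Proposition \ref{minimal}. That proof establishes: if $(a_1,\dots,a_n)$ is a $\P$-analysis by reductions and $(c_1,\dots,c_\ell)$ is any $\P$-analysis of $a$ over $A$, then $c_i\in\acl(Aa_i)$ for $i\leq\min(n,\ell)$; and dually, if $(b_1,\dots,b_n)$ is a $\P$-analysis by coreductions and $(c_1,\dots,c_\ell)$ is any $\P$-analysis, then $b_{n-j}\in\acl(Ac_{\ell-j})$ for $j<\min(n,\ell)$. Applying the first containment with $(c_i):=(b_i)$ already gives $b_i\in\acl(Aa_i)$ for each $i\leq n$, and hence $U(b_i/A)\leq U(a_i/A)$.

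For the first assertion, I combine this with additivity of $U$-rank. The hypothesis that $(a_i)$ and $(b_i)$ have the same $U$-type gives $U(a_j/Aa_{j-1})=U(b_j/Ab_{j-1})$ for each $j$, and summing up to $i$ produces $U(a_i/A)=U(b_i/A)$. Together with $b_i\in\acl(Aa_i)$ this forces $\acl(Ab_i)=\acl(Aa_i)$, the asserted interalgebraicity. For the moreover clause, take any $\P$-analysis $(c_1,\dots,c_n)$ of $a$ over $A$ (its length is forced to equal $n$, the minimal length, by Proposition \ref{minimal}), and apply both containments to get $\acl(Ab_i)\subseteq\acl(Ac_i)\subseteq\acl(Aa_i)$. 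Combined with $\acl(Aa_i)=\acl(Ab_i)$ from the first step, all three $\acl$'s coincide.

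The canonicity equivalence then drops out. If $p$ admits a $\P$-analysis by reductions and one by coreductions of the same $U$-type, then by Proposition \ref{minimal} each is minimal, and by the moreover clause each is interalgebraic with every other $\P$-analysis of that length; that is precisely canonicity. Conversely, suppose $(c_1,\dots,c_n)$ is canonical. Assuming a $\P$-analysis by reductions exists (as it does for finite $U$-rank by the remark following the reduction definition), it has length $n$ by Proposition \ref{minimal}, hence is interalgebraic with $(c_i)$; since $\P$-reductions are unique up to interalgebraicity, each $c_i$ is then itself the $\P$-reduction of $a$ over $Ac_{i-1}$, so $(c_i)$ is by reductions. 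The dual argument, under the parallel assumption that a coreduction analysis exists, shows it is also by coreductions. I do not expect any substantive obstacle: the entire argument is bookkeeping around the containments already produced in Proposition \ref{minimal}, plus one use of additivity of $U$-rank to upgrade an algebraic containment to interalgebraicity.
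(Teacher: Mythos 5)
Your proposal is correct and follows essentially the same route as the paper: both extract the two containments $c_i\in\acl(Aa_i)$ and $b_i\in\acl(Ac_i)$ from the proof of Proposition \ref{minimal}, use additivity of $U$-rank to upgrade $b_i\in\acl(Aa_i)$ to interalgebraicity, and prove the converse by transporting the reduction/coreduction property along interalgebraic bases. Your explicit flagging of the existence hypotheses for the reduction and coreduction analyses in the converse is a point the paper leaves implicit, but otherwise the arguments coincide.
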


\begin{proof}
  Having the same $U$-type implies that $U(a_i/A)=U(b_i/A)$ for $i=1,2,...,n$.
  Let $(c_1,...,c_n)$ be another $\P$-analysis of $a$ over $A$,
   We have seen in the proof of \ref{minimal} that $c_i\in\acl(Aa_i)$ and $b_{i}\in \acl(Ac_i)$ for $i=1,2,...,n$.
  Therefore $U(a_i/A)=U(b_i/A)=U(c_i/A)$ and  $\acl(Aa_i)=\acl(Ab_i)=\acl(Ac_i)$ for $i=1,2,...,n$, as desired.

  The ``in particular'' clause now follows by Proposition \ref{minimal}. For the converse,
   let $(a_i)_{i=1}^n,(b_i)_{i=1}^n,(c_i)_{i=1}^n$ be
$\P$-analyses of $a$ over $A$, which are an analysis by reductions, an analysis by coreductions, and a canonical
analysis, respectively. We have that $a_i$ is the $\P$-reduction of $a$ over $Aa_{i-1}$, $\acl(Aa_i)=\acl(Ac_i)$,
and $\acl(Aa_{i-1})=\acl(Ac_{i-1})$, so $c_i$ is the $\P$-reduction of $a$ over $Ac_{i-1}$. Thus $(c_i)_{i=1}^n$
is a $\P$-analysis by reductions. Similarly, we have that $b_i$ is the $\P$-coreduction of $b_{i+1}$ over $A$, $\acl(Ab_i)=\acl(Ac_i)$,
and $\acl(Ab_{i+1})=\acl(Ac_{i+1})$, so $c_i$ is the $\P$-coreduction of $a$ over $Ac_{i-1}$. Thus $(c_i)_{i=1}^n$
is a $\P$-analysis by coreductions.
\end{proof}

Here is a local criterion to determine whether an analysis is an analysis by reductions.

\begin{lemma}\label{lemcr1}
  Let $(a_1,...,a_n)$ be a $\P$-analysis of $a$ over $A$.
  Then it is a
   $\P$-analysis by reductions iff $a_i$ is a $\P$-reduction of $a_{i+1}$ over $Aa_{i-1}$
   for $i=1,...,{n-1}$.
\end{lemma}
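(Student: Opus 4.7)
The plan is to unwind the definition of $\P$-reduction on both sides; the forward direction is essentially immediate, while the backward direction requires a downward iteration of the local reduction property.

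For the forward direction, assume $(a_1,\dots,a_n)$ is a $\P$-analysis by reductions, so $a_k$ is the $\P$-reduction of $a$ over $Aa_{k-1}$ for each $k$. Fix $i\in\{1,\dots,n-1\}$. The almost internality of $\stp(a_i/Aa_{i-1})$ and the containment $a_i\in\dcl(Aa_{i+1})\subseteq\acl(Aa_{i+1})$ come directly from the analysis structure. For maximality, suppose $b'\in\acl(Aa_{i+1})$ and $\stp(b'/Aa_{i-1})$ is almost $\P$-internal. Since $a_{i+1}\in\dcl(Aa_n)$ and $\acl(Aa_n)=\acl(Aa)$, we get $b'\in\acl(Aa)$, and the maximality of $a_i$ as the $\P$-reduction of $a$ over $Aa_{i-1}$ forces $b'\in\acl(Aa_i)$.

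For the backward direction, assume $a_i$ is the $\P$-reduction of $a_{i+1}$ over $Aa_{i-1}$ for each $i\in\{1,\dots,n-1\}$. Fix $k\in\{1,\dots,n\}$; we show $a_k$ is the $\P$-reduction of $a$ over $Aa_{k-1}$. The case $k=n$ is immediate: $\stp(a/Aa_{n-1})$ is almost $\P$-internal (since $\acl(Aa_n)=\acl(Aa)$ and $\stp(a_n/Aa_{n-1})$ is), and $a_n$ is interalgebraic over $Aa_{n-1}$ with $a$, which is itself the $\P$-reduction up to interalgebraicity. For $k<n$, the only nontrivial clause is maximality: given $b'\in\acl(Aa)=\acl(Aa_n)$ with $\stp(b'/Aa_{k-1})$ almost $\P$-internal, I perform a downward induction on $j$ from $n$ to $k$ to show $b'\in\acl(Aa_j)$. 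The base case $j=n$ is given. For the step from $j$ to $j-1$ (with $j\geq k+1$), the hypothesis says $a_{j-1}$ is the $\P$-reduction of $a_j$ over $Aa_{j-2}$; since $a_{k-1}\in\dcl(Aa_{j-2})$ and almost $\P$-internality is preserved when the parameter set is enlarged, $\stp(b'/Aa_{j-2})$ is almost $\P$-internal, so the reduction property applied to $b'\in\acl(Aa_j)$ yields $b'\in\acl(Aa_{j-1})$. After $n-k$ iterations we reach $b'\in\acl(Aa_k)$.

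The only nonformal ingredient is the preservation of almost $\P$-internality under enlargement of the parameter set, which is a standard calculation in stable theories: given witnesses $B,c$ for almost internality of $\stp(b'/A)$, one chooses $B'c'\equiv_{Ab'}Bc$ with $B'c'\forkindep_{Ab'}A'$ and verifies via the extension axiom that $b'\forkindep_{A'}B'$, so $B',c'$ still witness almost internality over $A'$. I expect this to be the only place where care is needed; everything else is bookkeeping with the definitions and the chain $a_0\in\dcl(Aa_1)\subseteq\dcl(Aa_2)\subseteq\cdots\subseteq\dcl(Aa_n)$.
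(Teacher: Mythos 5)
Your proof is correct and follows essentially the same route as the paper's: the forward direction by restricting the maximality clause from $\acl(Aa)$ to $\acl(Aa_{i+1})$, and the backward direction by a downward induction pulling an almost $\P$-internal element from $\acl(Aa_n)$ down to $\acl(Aa_k)$ one level at a time via the local reductions, using base monotonicity of almost internality. The only cosmetic difference is that you verify the maximality clause for an arbitrary $b'$ rather than for a pre-chosen $\P$-reduction of $a$ over $Aa_{k-1}$, which if anything avoids an appeal to existence of reductions.
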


\begin{proof}
  Suppose $(a_1,...,a_n)$ is a $\P$-analysis by reductions of $a$ over $A$. For any $k=1,2,...,n-1$,
  $a_k$ is a $\P$-reduction of $a$ over $Aa_{k-1}$, i.e., for any $a_k'\in\acl(Aa)$, if
  $\stp(a_k'/Aa_{k-1})$ is almost $\P$-internal, then $a_k'\in\acl(a_k)$. In particular,
  for any $a_k'\in\acl(Aa_{k+1})$, if
  $\stp(a_k'/Aa_{k-1})$ is almost $\P$-internal, then $a_k'\in\acl(a_k)$. Note that $a_k\in\acl(Aa_{k+1})$,
  so $a_k$ is a $\P$-reduction of $a_{k+1}$ over $Aa_{k-1}$.

  Now suppose $(a_1,...,a_n)$ is a $\P$-analysis of $a$ over $A$ such that
  $a_i$ is a $\P$-reduction of $a_{i+1}$ over $Aa_{i-1}$
   for $i=1,...,{n-1}$.
  We need to check that $a_k$ is the $\P$-reduction of $a$ over $Aa_{k-1}$. In fact, let $a_k'$ be
  the $\P$-reduction of $a$ over $Aa_{k-1}$, then we only need to show that $a_k'\in \mathrm{acl}(Aa_k)$.

  We know $a_k'\in\mathrm{acl}(Aa_n)$. Suppose $a_k'\in\mathrm{acl}(Aa_i)$ for some $i$ such
  that $k<i\leq n$.
  Since $a_k'$ is almost $\P$-internal over $Aa_{k-1}$ and $k-1<i-1$,
  $a_k'$ is $\P$-internal over $Aa_{i-2}$. Now $a_{i-1}$ is a $\P$ reduction of $a_i$ over $Aa_{i-2}$,
  $a_k'\in\mathrm{acl}(Aa_i)$, and $a_k'$ is almost $\P$-internal over $Aa_{i-2}$,
    so $a_k'\in\mathrm{acl}(Aa_{i-1})$.
  By induction we get $a_k'\in\mathrm{acl}(Aa_{k})$.
\end{proof}

We have a similar criterion for analyses by coreductions.

\begin{lemma}\label{lemcr2}
   A $\P$-analysis $(a_1,...,a_n)$ of $a$ over $A$ is a
   $\P$-analysis by coreductions iff $a_i$ is a $\P$-coreduction of $a_{i+1}$ over $Aa_{i-1}$
   for $i=1,...,{n-1}$.
\end{lemma}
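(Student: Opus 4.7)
The plan is to mirror Lemma \ref{lemcr1}, treating the two directions separately. The $(\Rightarrow)$ direction is a direct verification, while $(\Leftarrow)$ requires an induction on the position in the analysis, but read from the bottom up rather than the top down. In both cases I rely on two standard facts about almost $\P$-internality: it is preserved under extending the parameter set, and it descends from a type to any $\acl$-subtuple.

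For $(\Rightarrow)$, assume $(a_1,\ldots,a_n)$ is a $\P$-analysis by coreductions and fix $i \in \{1,\ldots,n-1\}$. The containment $a_i \in \acl(Aa_{i-1}a_{i+1})$ and the almost $\P$-internality of $\stp(a_{i+1}/Aa_{i-1}a_i)$ are immediate from the analysis. For minimality, given any $c \in \acl(Aa_{i+1})$ with $\stp(a_{i+1}/Aa_{i-1}c)$ almost $\P$-internal, I would set $c' = a_{i-1}c \in \acl(Aa_{i+1})$; then $\stp(a_{i+1}/Ac')$ is almost $\P$-internal, so applying the hypothesis that $a_i$ is the $\P$-coreduction of $a_{i+1}$ over $A$ yields $a_i \in \acl(Ac') = \acl(Aa_{i-1}c)$.

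For $(\Leftarrow)$, assume the local criterion holds. I will induct on $k \in \{1,\ldots,n-1\}$ to show that $a_k$ is the $\P$-coreduction of $a_{k+1}$ over $A$. The base case $k=1$ is automatic since $a_0 = \varnothing$. For the inductive step, let $b$ denote the $\P$-coreduction of $a_{k+1}$ over $A$. Since $a_k \in \acl(Aa_{k+1})$ and $\stp(a_{k+1}/Aa_k)$ is almost $\P$-internal, the minimality of $b$ gives $b \in \acl(Aa_k)$. Preservation of almost $\P$-internality under adding the parameter $a_{k-1}$ then shows that $\stp(a_{k+1}/Aa_{k-1}b)$ is almost $\P$-internal, and the local criterion at level $k$ (with $c = b$) yields $a_k \in \acl(Aa_{k-1}b)$.

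The main obstacle is to eliminate the extraneous $a_{k-1}$ from this last containment; this is precisely what forces the induction on $k$ and the use of the coreduction property one level below. The idea is that $\stp(a_k/Ab)$ inherits almost $\P$-internality from $\stp(a_{k+1}/Ab)$ via $a_k \in \acl(Aa_{k+1})$; together with $b \in \acl(Aa_k)$ and the inductive hypothesis that $a_{k-1}$ is the $\P$-coreduction of $a_k$ over $A$, the minimality of $a_{k-1}$ then forces $a_{k-1} \in \acl(Ab)$. Substituting into $a_k \in \acl(Aa_{k-1}b)$ gives $a_k \in \acl(Ab)$, so $a_k$ and $b$ are interalgebraic over $A$, completing the induction and the proof.
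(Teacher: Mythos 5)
Your forward direction is exactly the paper's argument and is fine. The backward direction, however, has a gap at its very first move: you ``let $b$ denote the $\P$-coreduction of $a_{k+1}$ over $A$.'' Nothing in the hypotheses guarantees that such a $b$ exists. The lemma lives in the general setting of Section \ref{redu} (an arbitrary stable $T$ with elimination of imaginaries and an arbitrary invariant family $\P$), where coreductions of finite-rank types need not exist; the paper only establishes existence under the CBP or in $\DCF$ (Lemma \ref{dcf}). Moreover, the conclusion you are proving --- that $(a_1,\dots,a_n)$ is an analysis by coreductions, i.e.\ that $a_k$ \emph{is} a $\P$-coreduction of $a_{k+1}$ over $A$ --- already contains that existence assertion, so positing $b$ assumes part of what is to be shown.

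Granting existence of $b$, the rest of your induction is correct, and indeed rather careful: you establish $b\in\acl(Aa_k)$ before invoking the minimality of $a_{k-1}$, so each appeal to the coreduction property is made against a witness lying in the algebraic closure of the relevant tuple, as the definition requires. But the gap cannot be closed by simply replacing $b$ with an arbitrary witness $a'$ of the almost $\P$-internality of $\stp(a_{k+1}/Aa')$: your step $b\in\acl(Aa_k)$ uses the minimality of $b$ and would be lost, and with it the hypothesis needed to apply the inductive coreduction property of $a_{k-1}$. The paper instead verifies the defining minimality property of $a_k$ directly: fix any such witness $a'$ and show, by an induction running \emph{up} the analysis from $i=1$ to $i=k$, that $a_i\in\acl(Aa')$ --- the base case is the local criterion at $i=1$ (where the base is just $A$, since $a_0=\varnothing$), and the inductive step combines $a_{i-1}\in\acl(Aa')$ with the hypothesis that $a_i$ is the $\P$-coreduction of $a_{i+1}$ over $Aa_{i-1}$ and the almost $\P$-internality of $\stp(a_{i+1}/Aa')$. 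You should restructure your backward direction along these lines, or else add the standing assumption that $\P$-coreductions of finite-rank types exist.
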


\begin{proof}
  Suppose $(a_1,...,a_n)$ is a $\P$-analysis by coreductions of $a$ over $A$. For any $k=1,2,...,n-1$,
  $a_k$ is a $\P$-coreduction of $a_{k+1}$ over $A$, i.e., for any $a_k'\in\acl(Aa_{k+1})$, if
  $\stp(a_{k+1}/Aa_k')$ is $\P$-internal, then $a_k\in\acl(Aa_k')$. In particular,
  for any $a_k'\in\acl(Aa_{k+1})$, if
  $\stp(a_{k+1}/Aa_{k-1}a_k')$ is $\P$-internal, then $a_k\in\acl(Aa_{k-1}a_k')$.
  So we have that $a_k$ is a reduction of $a_{k+1}$ over $Aa_{k-1}$.

  Now suppose $(a_1,...,a_n)$ is a $\P$-analysis of $a$ over $A$ such that
  $a_i$ is a $\P$-coreduction of $a_{i+1}$ over $Aa_{i-1}$
   for $i=1,...,{n-1}$.
  Fixing a $k\in\{1,2,...,n-1\}$,
  we need to check that $a_k$ is the $\P$-coreduction of $a_{k+1}$ over $A$. In fact, let $a'$ be
  be such that $\stp(a_{k+1}/Aa')$ is almost $\P$-internal. We need to prove that $a_k\in \mathrm{acl}(Aa')$.

  We know that $a_1\in \mathrm{acl}(Aa')$. This is because $a_1$ is the $\P$-coreduction of $a_2$ over $A$,
  and $\stp(a_2/Aa')$ is almost $\P$-internal (since $a_2\in\dcl(Aa_{k+1}))$.

  Suppose $a_{i-1}\in\mathrm{acl}(Aa')$ for some $i$
  such that $1< i\leq k$.
  Since $a_{i+1}$ is almost $\P$-internal over $Aa'$ (as $i+1\leq k+1$,
  $a_{i+1}\in\mathrm{acl}(Aa_{k+1})$), and $a_i$ is the $\P$-coreduction of $a_{i+1}$ over $Aa_{i-1}$, we have that
  $a_i\in \mathrm{acl}(Aa')$. By induction we get $a_k\in \mathrm{acl}(Aa')$.
\end{proof}

It follows from the above lemma that an incompressible analysis of $U$-type $(1,1,...,1)$ is canonical.
Indeed, for such an analysis $(a_1,...,a_n)$ of $a$ over $A$, as $\stp(a_{i+1}/Aa_{i-1})$ is not almost
$\P$-internal, by rank consideration, $a_i$ must be both the $\P$-reduction and the $\P$-coreduction of $a_{i+1}$
over $Aa_{i-1}$ for $i=1,2,...,n-1$.

We end this section by pointing out that once we have a type with an incompressible analysis of $U$-type
$\underbrace{(1,1,...,1)}_n$ --  as for example we do in $\DCF$ by Corollary \ref{iter} -- then every
decreasing sequence of positive integers of length $n$ appears as the $\U$-type of the $\P$-analysis by reductions of
some other type in this theory. A similar statement holds for increasing sequences and $\P$-analyses by coreductions provided that every finite $U$-rank type has a $\P$-coreduction.
For convenience we work over the empty set.

\begin{proposition}\label{seqred}
  Suppose $(a_1,...,a_n)$ is a $\P$-analysis of $a$ of $U$-type $(1,1,...,1)$.
  \begin{enumerate}[label=(\alph*)]
    \item Given positive integers $s_1\geq...\geq s_n$, there exists a tuple whose $\P$-analysis by reductions is of
    $U$-type $(s_1,...,s_n)$.
    \item Suppose every type of finite $U$-rank has a $\P$-coreduction. Given positive integers $s_1\leq...\leq s_n$, there exists a tuple whose $\P$-analysis by coreductions is of $U$-type $(s_1,...,s_n)$.
  \end{enumerate}
\end{proposition}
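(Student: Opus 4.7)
The plan is to build the required tuple by amalgamating independent realizations of $\tp(a_1,\ldots,a_n)$ along a staircase pattern dictated by the target $U$-type. A key preliminary observation is that, by the remark immediately preceding the proposition, an incompressible analysis of $U$-type $(1,\ldots,1)$ is canonical, so by Proposition~\ref{unique} the given $(a_1,\ldots,a_n)$ is simultaneously an analysis by reductions and by coreductions; in particular $a_i$ is both the $\P$-reduction and the $\P$-coreduction of $a_{i+1}$ over $a_{i-1}$.

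For part (a), given $s_1\geq\cdots\geq s_n$, I take $s_1$ mutually independent realizations $(a^{(j)}_1,\ldots,a^{(j)}_n)$, $j=1,\ldots,s_1$, of $\tp(a_1,\ldots,a_n)$. Setting $k_j:=\max\{i:s_i\geq j\}$ gives $|\{j:k_j\geq i\}|=s_i$, and I define
$$
e_i:=(a^{(j)}_{\min(i,k_j)})_{j=1}^{s_1}.
$$
The inclusion $e_{i-1}\in\dcl(e_i)$ is immediate from monotonicity of $\min(i,k_j)$ in $i$, and by independence of the copies together with almost $\P$-internality of each $\stp(a^{(j)}_i/a^{(j)}_{i-1})$, the step $\stp(e_i/e_{i-1})$ is almost $\P$-internal of $U$-rank $s_i$. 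To verify this is the analysis by reductions, I apply Lemma~\ref{lemcr1}: over $e_{i-1}$, the type $\stp(e_{i+1}/e_{i-1})$ decomposes as an independent product indexed by $j$. For $k_j\geq i+1$ the $j$th factor is $\stp(a^{(j)}_{i+1}/a^{(j)}_{i-1})$, whose $\P$-reduction is $a^{(j)}_i$; for $k_j=i$ the factor is $\stp(a^{(j)}_i/a^{(j)}_{i-1})$, which is already its own reduction. An additivity claim --- that $\P$-reductions distribute over forking-independent products --- then identifies the overall $\P$-reduction with $e_i$.

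Part (b) is dual. Given $s_1\leq\cdots\leq s_n$, I take $s_n$ independent copies, set $\ell_j:=\min\{i:s_i\geq j\}$ so that $|\{j:\ell_j\leq i\}|=s_i$, and define
$$
e_i:=(a^{(j)}_{i-\ell_j+1})_{j\,:\,\ell_j\leq i}.
$$
Via Lemma~\ref{lemcr2} and the dual additivity of $\P$-coreductions over forking-independent products, one checks that $(e_1,\ldots,e_n)$ is the $\P$-analysis by coreductions of $U$-type $(s_1,\ldots,s_n)$; the hypothesis that coreductions exist for every finite-$U$-rank type is used both to know the original $a_i$ is the coreduction of $a_{i+1}$ over $a_{i-1}$ and to make sense of the additivity claim for the constructed tuples.

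The main obstacle is the additivity claim. For reductions, if $X\forkindep_A Y$ with $\P$-reductions $b_X,b_Y$, then $(b_X,b_Y)$ is the $\P$-reduction of $XY$ over $A$; the forward inclusion follows from combining witnesses of almost $\P$-internality via independence. For maximality, given $c\in\acl(AXY)$ with $\stp(c/A)$ almost $\P$-internal: independence implies the $\P$-reduction of $X$ over $AY$ is still $b_X$, forcing $c\in\acl(AYb_X)$; symmetrically $c\in\acl(AXb_Y)$; and a forking-calculus argument gives $X\forkindep_{Ab_Xb_Y}Y$, so $c\in\acl(Ab_Xb_Y)$ as desired. The coreduction analog is proved by a parallel independence argument, using minimality of $\P$-coreductions under independent extensions.
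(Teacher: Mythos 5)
Your tuples $e_i$ are interdefinable with the $\beta_i$ in the paper's own proof, so the construction is exactly the right one, and routing the verification through Lemmas \ref{lemcr1} and \ref{lemcr2} plus an additivity principle for (co)reductions of independent products is a legitimate alternative to the paper's hands-on rank computation. The difficulty is that your additivity claim, which you correctly identify as the main obstacle, is discharged only modulo the assertion that ``independence implies the $\P$-reduction of $X$ over $AY$ is still $b_X$,'' and that assertion is precisely where all of the content of part (a) is concentrated. It is true, but it is not a formal consequence of the definition of reduction: one must show that if $d\in\acl(AXY)$, $\stp(d/AY)$ is almost $\P$-internal and $X\forkindep_A Y$, then $d\in\acl(AYb_X)$. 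The standard proof takes $e=\mathrm{Cb}(\stp(Yd/\acl(AX)))$, observes that $e$ lies in the algebraic closure of $A$ together with a finite Morley sequence $(Y_jd_j)$ of that type, that each $d_j$ is almost $\P$-internal over $AY_j$ while $(Y_j)$ is independent from $X$ (hence from $e$) over $A$, so that $\stp(e/A)$ is almost $\P$-internal and therefore $e\in\acl(Ab_X)$; then $d\in\acl(AYe)$. The paper's dictionary-order argument in part (a) is doing exactly this work by hand, so as written you have replaced the hard step by an unproved lemma rather than eliminated it.

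The coreduction half is in worse shape: the ``parallel independence argument'' you invoke is not actually parallel. To deduce $c_X\in\acl(Ab)$ from ``$\stp(X/Ab)$ is almost $\P$-internal'' with $b\in\acl(AXY)$, you need the coreduction of $X$ over $A$ to be minimal against arbitrary parameter sets over which $X$ becomes almost $\P$-internal, whereas the definition in the paper only gives minimality against $b'\in\acl(AX)$. That strengthened minimality is an extra input (it is what the CBP-based existence theorem of Chatzidakis actually delivers, and the paper's own part (b) uses it silently when it concludes $a_j^{(k)}\in\acl(b)$ from almost internality of $a_{j+1}^{(k)}$ over $b$); it is not obtained by dualizing your reduction argument, and your proposal needs to state and justify or cite it. With those two lemmas supplied, your argument goes through and yields a somewhat more conceptual proof than the paper's.
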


\begin{proof}
  (a) Let $\bar a^{(j)}=(a_1^{(j)},...,a_n^{(j)})$, $j=1,2,...$ be tuples
   such that $(\bar a^{(1)},\bar a^{(2)},...)$ is a Morley sequence of $\tp(a_1,...,a_n)$.
   In particular, $a_i^{(j)}$ is the $\P$-reduction and the $\P$-coreduction of $a_{i+1}^{(j)}$.
  Let $\alpha_i=(a_i^{(1)},...,a_i^{(s_i)})$ and $\beta_i=(\alpha_1,...,\alpha_i)$.
  Note that $a_i^{(j)}\in\beta_i$ for $j=1,2,...,s_i$.
  We claim the tuple
  $\beta_n$ is $\P$-analysable and its $\P$-analysis by reductions is of $U$-type $(s_1,...,s_n)$.
  To show this, since $(\bar a^{(j)})_j$ is a Morley sequence, we have
  \begin{equation*}\begin{split}
  U(\beta_i/\beta_{i-1})=&U(\alpha_i/\beta_{i-1})\\
  =&U(a_i^{(1)}...a_i^{(s_i)}/\beta_{i-1})\\
  =&U(a_i^{(1)}...a_i^{(s_i)}/a_{i-1}^{(1)}a_{i-1}^{(s_i)})\\
  =&s_i,
  \end{split}\end{equation*}
  so we only need to prove that the $\P$-analysis by reductions of $\beta$ is
  $(\beta_1,\beta_2,...,\beta_n)$.

  Let $b_i$ be the reduction of $\beta_n$ over $\beta_{i-1}$. We claim that $b_i$ is
  interalgebraic with $\beta_{i}$.
  Since $a_{i-1}^{(j)}\in\dcl(\beta_{i-1})$ for $j=1,2,...,s_i$ (since $s_{i-1}\geq s_i$),
  $\stp(a_{i}^{(j)}/\beta_{i-1})$ is almost $\P$-internal for $j=1,2,...,s_i$, so
  $\stp(\alpha_i/\beta_{i-1})$ is almost $\P$-internal.
  Since
  $\beta_i\in\dcl(\alpha_i,\beta_{i-1})$,
  $\stp(\beta_i/\beta_{i-1})$
  is almost $\P$-internal, so $\beta_i\in b_i$. We now need to show that
  $U(b_i/\beta_{i})=0$. Toward a contradiction, suppose $U(b_i/\beta_{i})>0$.

  Set $B=\beta_i$, which is the collection of elements of the form $a_p^{(q)}$ where $1\leq p\leq i$ and $1\leq q\leq s_i$.
  Now we add elements of the form $a_p^{(q)}$ one by one into $B$ according to dictionary order of $(p,q)$ where
  $i+1\leq p\leq n$ and $1\leq q\leq s_i$ as long as $U(b_i/B)$ remains unchanged. Since $b_i\in \beta_n$,
  $U(b_i/\beta_n)=0$, so this process will terminate for some $a_p^{(q)}$ where $U(b_i/Ba_p^{(q)})<U(b_i/B)$.
  
  Now $B$ contains elements of the form $a_{p'}^{(q')}$ where $(p',q')<(p,q)$ by dictionary order. We have $a_p^{(q)}\underset{B}{\not\forkindep} b_i$.
   As $a_{p-1}^{(q)}\in B$ and $a_{p}^{(q)}\underset{a_{p-1}^{(q)}}\forkindep B$, $U(a_p^{(q)}/B)=1$,
   so $a_p^{(q)}\in \acl (Bb_i)$. However, Let $C=\{a_i^{(j)}:a_{i+1}^{(j)}\in \dcl(B)\}$. Then $\stp(B/C)$ is almost $\P$-internal as $\stp(a_{i+1}^{(j)}/a_{i}^{(j)})$ is almost internal for any $i,j$,
  and $\stp(b_i/C)$ is almost $\P$-internal because $\beta_{i-1}\in\dcl(C)$. But $\stp(a_p^{(q)}/C)$ is not almost
  $\P$-internal: since $a_{p-1}^{(q)}\not\in \acl(a_{p-2}^{(q)})$ and $\displaystyle{ a_{p-1}^{(q)}\underset{{a_{p-2}^{(q)}}}\forkindep C}$,
  we have $a_{p-1}^{(q)}\not\in\acl(C)$.

  (b)
  Let $\bar a^{(j)}=(a_1^{(j)},...,a_n^{(j)})$, $j=1,2,...$ be tuples
   such that $(\bar a^{(1)},\bar a^{(2)},...)$ is a Morley sequence of $\tp(a_1,...,a_n/A)$.
  Let $\beta_i=(a_1^{(1)}...a_1^{(s_{n-i+1})},...,a_i^{(1)}...a_i^{(s_1)})$.
  Let $f(j)=\min\{k:j\leq s_k\}$, and let $f(j)$ be infinity if it is not defined.
  Then $a_{k}^{(j)}\in\beta_i$ iff $k\leq i-f(j)+1$ and $\displaystyle\beta_{i}=\bigcup_{j=1}^{s_{i}}a_{i+1-f(j)}^{(j)}$.
  We claim the tuple
  $\beta_n$ is $\P$-analysable and its $\P$-analysis by coreductions is of $U$-type $(s_1,...,s_n)$.
  Since
  $\displaystyle\beta_{i}=\bigcup_{j=1}^{s_{i}}a_{i+1-f(j)}^{(j)}$ and
  $\displaystyle\beta_{i-1}=\bigcup_{j=1}^{s_{i}}a_{i-f(j)}^{(j)}$
  (as $i-f(j)=0$ for $s_{i-1}<j\leq s_i$, we may set the upper bound as $s_i$), we have
  \begin{equation*}\begin{split}
    U(\beta_i/\beta_{i-1})=&U(\bigcup_{j=1}^{s_{i}}a_{i+1-f(j)}^{(j)}/\beta_{i-1})\\
    =&\sum_{j=1}^{s_i}U(a_{i+1-f(j)}^{(j)}/a_{i-f(j)}^{(j)})\\
     =&s_i
  \end{split}\end{equation*}
  as $(\bar a^{(j)})_j$ is a Morley sequence.
  Thus we only need to prove that the $\P$-analysis by coreductions of $\beta$ is
  $(\beta_1,\beta_2,...,\beta_n)$.

  Suppose $b$ is the $\P$-coreduction of $\beta_{i+1}$ over the empty set.
  We claim that $\acl(b)=\acl(\beta_i)$.
   Note that $\stp(\beta_{i+1}/\beta_i)$
  is almost $\P$-internal, so $b\in\acl(\beta_i)$.
  Take any $a_j^{(k)}\in \beta_i$. Since $a_{j+1}^{(k)}\in\beta_{i+1}$
  and $\beta_{i+1}$ is almost $\P$-internal over $b$,
  $a_{j+1}^{(k)}$ is almost $\P$-internal over $b$, so $a_{j}^{(k)}\in b$ since $a_{j}^{(k)}$
  is the $\P$-coreduction of $a_{j+1}^{(k)}$. We therefore have that $\beta_i\in\acl(b)$.
  
\end{proof}

\section{A Construction in $\DCF$}\label{sec5}

\noindent In this section we show that in $\DCF$ we can do better than the conclusions of Proposition
\ref{seqred}. Given any sequence of positive integers we provide a type which has a canonical $\C$-analysis with that $U$-type. Throughout we use the fact proven in Lemma \ref{dcf} that
  any finite rank type has a $\C$-coreduction.

Suppose $n_1,...,n_\ell$ are positive integers. We want to construct a type admitting a $\C$-analysis
in $\ell$ steps where the $i$th step has $U$-rank $n_i$, and such that the analysis is canonical.
Here is our construction.

  For convenience, we name everything in $\Q^{\alg}$ in the language.
  Let $c_{ij}\in\Q^{\alg}$ be algebraic numbers for $i=1,2,...,\ell$
  and $1\leq j\leq n_i$ such that $\{c_{ij}\}_{j=1}^{n_i}$ is $\Q$-linearly independent
  for $i=1,2,...,\ell$.

  We inductively define $(D_i,e_i)$ for $i=1,2,...,\ell$ as follows:

  Set $D_1:=\delta$ and let $e_1$ be a generic solution over $\varnothing$ to
  \begin{align}(D_1-c_{11})(D_1-c_{12})...(D_1-c_{1n_1})x=0. \tag{E$_1$}\end{align}
  For $i>1$ set $\displaystyle D_i:=\frac {\delta}{\prod_{j=1}^{i-1} e_{j}}$ and let $e_i$ be a generic solution over
  $\{e_1,...e_{i-1}\}$ to
  \begin{align}(D_i-c_{i1})(D_i-c_{i2})...(D_i-c_{in_i})x=0. \tag{E$_i$}\end{align}

   The notation $D_i-c_{ij}$ here represents a linear operator which sends $y$ to $D_iy-c_{ij}y$, so
  equation (E$_i$) is a linear differential equation over $\{e_1,...e_{i-1}\}$ of order $n_i$.

  Now let $a_i=(e_1,...,e_i)$ for $i=1,2,...,n$, and $a_0=\varnothing$.
   We will show that $(a_1...a_\ell)$ is a canonical $\C$-analysis of $a_\ell$ of $U$-type $(n_1,...,n_\ell)$.

  Since $e_i$ is a generic solution of (E$_i$), an order $n_i$ linear differential equation over $a_{i-1}$,
  we have $U(a_i/a_{i-1})=n_i$, and $\stp(a_i/a_{i-1})$ is almost $\C$-internal.
  So this is a $\C$-analysis
   of the correct $U$-type. We need to show it is by $\C$-reductions and $\C$-coreduction.

  Fixing $i\in\{1,2,...,\ell\}$, the following coordinatisation of solutions of (E$_i$) is a useful tool that we will apply often.

  \begin{lemma}\label{lclaim1}
  If $f$ is any solution to (E$_i$) then we can decompose $\displaystyle f=\sum_{j=1}^{n_i}f_{j}$
  such that each $f_j$ is a solution to $D_ix-c_{ij}x=0$ and $f$ is interdefinable with $(f_1,...,f_{n_i})$
  over $a_{i-1}$.
  \end{lemma}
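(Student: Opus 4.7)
The plan is to exhibit the decomposition via the spectral theory of commuting linear operators over the constant field. First note that each algebraic number $c_{ij}$ lies in $\C$ because $\C$ is algebraically closed in $\DCF$, and that $D_i = \delta/\prod_{k<i} e_k$ is a derivation on $\U$ sharing the same field of constants as $\delta$ (scaling a derivation by a nonzero element preserves the Leibniz rule, and $D_i x = 0$ iff $\delta x = 0$). Consequently, the operators $D_i - c_{ij}$ are $\C$-linear and pairwise commute.

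Setting $V := \ker\bigl((D_i - c_{i1})\cdots(D_i - c_{in_i})\bigr)$ and $V_j := \ker(D_i - c_{ij})$, standard facts about linear ODEs in $\DCF$ give $\dim_\C V = n_i$ and $\dim_\C V_j = 1$ for each $j$. Indeed, the first-order equation $D_i x = c_{ij}x$ has a nonzero solution in the differentially closed $\U$ (it amounts to solving $\ld(x) = c_{ij}\prod_{k<i} e_k$, and $\ld$ is surjective in $\DCF$), and any two nonzero solutions differ by an element of $\C$. Commutativity of the $D_i - c_{ik}$'s places each $V_j$ inside $V$.

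Next I would show $V = \bigoplus_{j=1}^{n_i} V_j$. The sum is direct by a Vandermonde argument: if $g_1 + \cdots + g_{n_i} = 0$ with $g_j \in V_j$, then applying $\prod_{k \neq j_0}(D_i - c_{ik})$ annihilates every $g_j$ with $j \neq j_0$ but scales $g_{j_0}$ by the factor $\prod_{k \neq j_0}(c_{ij_0} - c_{ik})$, which is nonzero because the $\Q$-linear independence of $\{c_{ij}\}_j$ in particular forces them to be distinct. A dimension count then gives equality, so any $f \in V$ decomposes uniquely as $f = \sum_j f_j$ with $f_j \in V_j$.

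For interdefinability over $a_{i-1}$, one direction is immediate from $f = \sum_j f_j$. For the reverse, the same Vandermonde manipulation yields
\[
f_j \;=\; \frac{1}{\prod_{k \neq j}(c_{ij} - c_{ik})}\,\prod_{k \neq j}(D_i - c_{ik})\,f,
\]
expressing $f_j$ as a polynomial in $D_i$ applied to $f$ with coefficients in $\Q^{\alg} \subseteq \C$; since $D_i$ itself is defined over $a_{i-1}$, this places $f_j \in \dcl(a_{i-1}, f)$. The only nonroutine input is the standard linear-ODE dimension count $\dim_\C V = n_i$ together with nontriviality of each $V_j$; modulo these, the lemma reduces to an elementary diagonalisation of commuting operators.
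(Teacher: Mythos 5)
Your proof is correct. The existence of the decomposition is obtained essentially as in the paper: both arguments rest on the facts that the solution set of (E$_i$) is an $n_i$-dimensional $\C$-vector space and that nonzero solutions of the first-order equations $D_ix-c_{ij}x=0$ are eigenvectors of the $\C$-linear operator $D_i$ with pairwise distinct eigenvalues (the paper exhibits a basis of generic solutions $g_j$ and writes $f=\sum_j u_jg_j$ with $u_j\in\C$, while you decompose the kernel as $\bigoplus_j V_j$; these amount to the same computation, and both correctly note that $\Q$-linear independence of $\{c_{ij}\}_j$ forces the $c_{ij}$ to be pairwise distinct, which is all that is needed). Where you genuinely diverge is in showing $(f_1,\dots,f_{n_i})\in\dcl(a_{i-1}f)$: the paper proves uniqueness of the decomposition and concludes via an automorphism argument (any tuple realizing the same type over $a_{i-1}f$ must coincide with $(f_1,\dots,f_{n_i})$), whereas you write down the explicit projection $f_j=\bigl(\prod_{k\neq j}(c_{ij}-c_{ik})\bigr)^{-1}\prod_{k\neq j}(D_i-c_{ik})f$, a linear differential operator applied to $f$ with everything defined over $a_{i-1}$ (the $c_{ij}$ are named constants and $D_i$ involves only $e_1,\dots,e_{i-1}$). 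Your route gives an explicit witness for the $\dcl$-membership and yields uniqueness of the decomposition as a byproduct; the paper's route avoids the Lagrange-type computation at the cost of invoking the characterization of definable closure via automorphisms. Both are complete; the only inputs you take as standard (that an order-$n_i$ linear equation has exactly an $n_i$-dimensional $\C$-space of solutions in $\U$, and that $\ld$ is surjective) are the same ones the paper relies on implicitly.
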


  \begin{proof}
  Indeed, let $g_{j}$ be a generic solution of $D_ix-c_{ij}x=0$.
  The set $\{g_{j}:j=1,2,...,n_i\}$ is
  $\C$-linearly independent because $g_{j}$'s are nonzero eigenvectors of different eigenvalues under the $\C$-linear
  operator $D_i$. Note that since $(D_i-c_{ij})$ commutes with $(D_i-c_{ij'})$ for any $j,j'$, each $g_{j}$
  is a solution to (E$_i$). Since (E$_i$) is an order $n_i$ linear differential equation and $\{g_{j}:j=1,2,...,n_i\}$
  is a set of $\C$-linearly independent solutions of (E$_i$), any solution of (E$_i$)
  is a
  $\C$-linear combination of $g_{j}$'s. In particular, $f$ is of the form $\displaystyle\sum_{j=1}^{n_i}u_{j}g_{j}$
  where $u_{j}\in\C$ for $j=1,...,n_i$.
  Let $f_{j}=u_{j}g_{j}$, so $\displaystyle f=\sum_{j=1}^{n_i}f_{j}$, and $f\in\dcl(f_1,...,f_{n_i})$. Also,
  $$D_if_j-c_{ij}f_j=u_j(D_ig_j-c_{ij}g_j)=0,$$
  so $f_j$ is a solution to $D_ix-c_{ij}x=0$.

  We still need to verify that $(f_1,...,f_{n_i})\in\dcl(a_{i-1}f)$.
  Indeed, suppose $(f^*_j)_{j=1}^{n_i}$ and $(f_j)_{j=1}^{n_i}$ have the same type over $a_{i-1}f$.
  Then in particular $f^*_j$ is a solution to $D_ix-c_{ij}x=0$, and
  $$\sum_{j=1}^{n_i}f_{j}=f=\sum_{j=1}^{n_i}f^*_{j}$$
  which gives us $\displaystyle\sum_{j=1}^{n_i}(f_{j}-f^*_j)=0$. As $\{f_{j}-f^*_j:j=1,2,...,n_i\}$ is a set of
  eigenvectors of different eigenvalues under the $\C$-linear
  operator $D_i$, we then have $f_{j}-f^*_j=0$ for all $j=1,2,...,n_i$, so $(f^*_j)_{j=1}^{n_i}=(f_j)_{j=1}^{n_i}$.
  \end{proof}

  \begin{lemma}\label{lclaim2} If $f$ is a generic solution to (E$_i$) over $a_{i-1}$,
  then $\{f_1,...,f_{n_i}\}$ obtained in Lemma \ref{lclaim1}
  is independent over $a_{i-1}$ and each $f_j$ is a generic solution to $D_ix-c_{ij}x=0$.
  \end{lemma}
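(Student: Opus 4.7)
The plan is to derive everything from a rank count. I would first record two ingredients: (1) since (E$_i$) is a linear differential equation of order $n_i$ over $a_{i-1}$, its solution set is (isomorphic to) an $n_i$-dimensional $\C$-vector space, so its generic type over $a_{i-1}$ has $U$-rank $n_i$; in particular $U(f/a_{i-1})=n_i$. By Lemma~\ref{lclaim1}, $f$ is interdefinable with $(f_1,\dots,f_{n_i})$ over $a_{i-1}$, so $U(f_1\dots f_{n_i}/a_{i-1})=n_i$. (2) Each $f_j$ is a solution of $D_i x - c_{ij}x=0$, whose solution set is a $1$-dimensional $\C$-vector space, so its generic type over $a_{i-1}$ has $U$-rank $1$; thus $U(f_j/a_{i-1})\leq 1$.

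Next, I would apply the Lascar equality (valid for finite $U$-rank) repeatedly to obtain
\[
n_i = U(f_1\dots f_{n_i}/a_{i-1}) = \sum_{j=1}^{n_i} U(f_j/a_{i-1}f_1\dots f_{j-1}) \leq \sum_{j=1}^{n_i} U(f_j/a_{i-1}) \leq n_i .
\]
Hence every inequality is an equality. Comparing the two sums yields $U(f_j/a_{i-1})=1$ for each $j$, so that each $f_j$ is a (nonzero) generic solution of $D_i x - c_{ij}x = 0$; and comparing the step sum to the first sum gives $U(f_j/a_{i-1}f_1\dots f_{j-1}) = U(f_j/a_{i-1})$ for every $j$, which is precisely the statement that $\{f_1,\dots,f_{n_i}\}$ is independent over $a_{i-1}$.

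I would briefly observe in passing that no $f_j$ can vanish, since $f_j=0$ would mean $f$ is annihilated by the linear operator $\prod_{k\neq j}(D_i-c_{ik})$, which is of order $n_i-1$, forcing $U(f/a_{i-1})\leq n_i-1$ and contradicting genericity of $f$. This is also automatically encoded in the rank equality $U(f_j/a_{i-1})=1$, but it may be worth flagging so that ``generic solution of $D_i x - c_{ij}x=0$'' is unambiguous.

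The main (very mild) obstacle is just to be sure about the $U$-rank of the solution set of (E$_i$) over $a_{i-1}$: because the linear operator has constant coefficients from $\Q^{\alg}$ acting via $D_i = \delta/\prod_{k<i}e_k$, the Kolchin closure of the solution set is an $n_i$-dimensional affine space over $\C$, hence irreducible and of $U$-rank $n_i$; so ``generic solution'' is unambiguous and has the expected rank. Once this is noted, the proof is essentially an application of additivity of $U$-rank.
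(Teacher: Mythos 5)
Your proof is correct and follows essentially the same route as the paper's: both arguments squeeze $n_i=U(f/a_{i-1})=\sum_j U(f_j/a_{i-1}f_1\dots f_{j-1})\leq\sum_j U(f_j/a_{i-1})\leq n_i$ using the interdefinability from Lemma~\ref{lclaim1} and the bound $U(f_j/a_{i-1})\leq 1$, and then read off both conclusions from the forced equalities. Your added remarks (nonvanishing of the $f_j$ and the identification of the solution set as an $n_i$-dimensional $\C$-vector space) are correct but not a different method.
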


  \begin{proof} Since $f$ is a generic solution over $a_{i-1}$ to (E$_i$), which is a
  linear differential equation of order $n_i$, we have $U(f/a_{i-1})=n_i$
  Since $f_j$ is a solution for $D_ix-c_{ij}x=0$,
  $U(f_{ij}/a_{i-1})\leq 1$. But
  \begin{equation*}\begin{split}
  n_i=&U(f/a_{i-1})\\
  =&U(f_1f_2...f_{n_i}/a_{i-1})\\
  =&U(f_1/a_{i-1})+U(f_2/a_{i-1}f_1)+...+U(f_{n_i}/a_{i-1}f_1f_2...f_{n_i-1})\\
  \leq &U(f_1/a_{i-1})+U(f_2/a_{i-1})+...+U(f_{n_i}/a_{i-1})\\
  \leq&n_i.
  \end{split}\end{equation*}
  So $U(f_j/a_{i-1})=1$ and $U(f_{j}/a_{i-1}f_1f_2...f_{j-1})=1$ for $j=1,2,...,n_i$.
  This means that $\{f_1,...,f_{n_i}\}$
  is independent over $a_{i-1}$ and each $f_j$ is a generic solution to $D_ix-c_{ij}x=0$.
  \end{proof}

  \begin{lemma}\label{lclaim3}
  Let $f$ be a generic solution over $\Q^{\alg}$ to (E$_1$). Then $\acl(f)\cap\C=\Q^{\alg}$.
  \end{lemma}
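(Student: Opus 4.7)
The plan is to reduce the claim to a computation of the field of constants of the differential subfield generated by $f$, and then to exploit the $\Q$-linear independence of $c_{11},\dots,c_{1n_1}$ via the diagonalization afforded by Lemma~\ref{lclaim1}.

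First I would apply Lemma~\ref{lclaim1} with $i=1$ (so $D_1=\delta$) to write $f$ as interdefinable over $\Q^\alg$ with a tuple $(f_1,\dots,f_{n_1})$, where each $f_j$ satisfies $\delta f_j = c_{1j}f_j$; by Lemma~\ref{lclaim2} these are mutually independent generic solutions, each of $U$-rank one. In particular $f_1,\dots,f_{n_1}$ are algebraically independent over $\Q^\alg$, and since $\delta f_j\in\Q^\alg(f_j)$ the differential field $K:=\Q^\alg\langle f\rangle$ coincides with the pure rational function field $\Q^\alg(f_1,\dots,f_{n_1})$. As $\acl(f)$ is the field-theoretic algebraic closure $K^\alg$ of $K$, it suffices to show that $K^\alg\cap\C=\Q^\alg$.

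The heart of the argument is the equality $K\cap\C=\Q^\alg$. The key observation is that $\delta$ acts diagonally on the monomial basis $\{m_k:=f_1^{k_1}\cdots f_{n_1}^{k_{n_1}}\}$ of the polynomial ring $R:=\Q^\alg[f_1,\dots,f_{n_1}]$ with eigenvalues $\lambda_k:=\sum_j k_j c_{1j}$, and by $\Q$-linear independence of the $c_{1j}$'s the assignment $k\mapsto\lambda_k$ is injective on $\mathbb Z^{n_1}$. Given $\gamma=A/B\in K\cap\C$ with $A,B\in R$ coprime, the equation $B\delta A=A\delta B$ together with coprimality in the UFD $R$ forces $B\mid\delta B$. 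Since $B$ and $\delta B$ share the same total degree (because $\lambda_k\neq 0$ for $k\neq 0$), the quotient is a scalar $q\in\Q^\alg$, so $\delta B=qB$; expanding in the monomial basis yields $b_k(\lambda_k-q)=0$ for each $k$, so by injectivity $B$ is a scalar multiple of a single monomial $m_{k^*}$. The same reasoning applied to $A$ via $\delta A=qA$ gives $A=a\cdot m_{k^*}$, whence $\gamma=a/b\in\Q^\alg$.

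Finally, to upgrade $K\cap\C=\Q^\alg$ to $K^\alg\cap\C=\Q^\alg$, I would invoke the standard fact that constants of an algebraic extension of a differential field are algebraic over the constants of the base: for $\alpha\in K^\alg\cap\C$ with minimal polynomial $p(x)\in K[x]$, differentiating $p(\alpha)=0$ and using $\delta\alpha=0$ produces a strictly lower-degree polynomial annihilating $\alpha$, forcing all coefficients of $p$ to lie in $K\cap\C=\Q^\alg$; since $\Q^\alg$ is already algebraically closed, $\alpha\in\Q^\alg$. The main obstacle is the middle paragraph — specifically, converting $B\delta A=A\delta B$ into the divisibility $B\mid\delta B$ and then extracting the structural constraint $\delta B=qB$ that, together with the injectivity of $k\mapsto\lambda_k$, rules out any nontrivial rational constant in $K$.
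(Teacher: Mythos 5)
Your proof is correct, and it gets to the conclusion by a genuinely different (and somewhat more classical differential-algebraic) route than the paper. Both arguments ultimately rest on the same key fact: $\delta$ scales the monomial $f_1^{r_1}\cdots f_m^{r_m}$ by the eigenvalue $\sum_j r_jc_{1j}$, and $\Q$-linear independence of the $c_{1j}$ makes $\bar r\mapsto \bar r\bar c$ injective on $\mathbb Z^m$. But the paper never isolates the constants of $K=\Q^{\alg}(f_1,\dots,f_m)$: it supposes a new constant $c\in\acl(f)\setminus\Q^{\alg}$, extracts from the algebraic relation $F(c,\bar f)=0$ a nonzero polynomial over $\C$ vanishing at $\bar f$, and runs a minimal-number-of-terms argument (subtracting $\bar r^*\bar c\,G-\delta G$) to show no such polynomial exists --- in effect proving that $(f_1,\dots,f_m)$ is algebraically independent over the full constant field $\C$, which kills the new constant and the algebraic closure in one stroke. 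You instead first compute $K\cap\C=\Q^{\alg}$ directly, via the UFD/degree argument forcing $\delta B=qB$ and hence $A,B$ to be scalar multiples of a single monomial, and then lift to $K^{\alg}$ by the standard fact that an algebraic extension of a differential field acquires no new constants. Your version buys a cleaner separation into two standard steps and makes explicit what the constant field of $K$ is; the paper's version avoids having to establish algebraic independence of the $f_j$ over $\Q^{\alg}$ (it only uses $f_j\neq 0$) and reuses the identical minimal-term trick in Lemma \ref{lclaim4}(i) and (iv), where the coefficients are no longer constants and your partial-fractions computation would not transfer. Two small points to tidy in your write-up: justify algebraic independence of $f_1,\dots,f_{n_1}$ over $\Q^{\alg}$ (it follows from Lemma \ref{lclaim2} together with $\operatorname{trdeg}\leq n_1$ forced by the order-one equations $\delta f_j=c_{1j}f_j$), and note the trivial edge case $\deg B=0$ in the divisibility step.
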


  \begin{proof}
    Let $m=n_1$. Let $(f_1,...,f_m)$ be the decomposition of $f$ by Lemma \ref{lclaim1} with respect to (E$_1$).
    Since $f$ is generic, $f_j\neq 0$ for $j=1,2,...,m$.
    Suppose the conclusion is false and there exists some $c$ such that $c\in(\acl(f)\cap\C)\backslash\Q^{\alg}$.
    Note that $\acl(f)=\Q( f_1,...,f_m)^{\alg}$ since $\delta f_j=c_{1j}f_j\in\Q^{\alg}(f_j)$.

  For simplicity, let $\bar f=(f_1,...,f_m)$, and $\bar y=(y_1,...,y_m)$.
  Let $F(x,\bar y)$ be a polynomial with coefficients in $\Q^{\alg}$ such that
  $F(c,\bar f)=0$ and $F(x,\bar f)\neq0$.
  Since $c\not\in\Q^{alg}$, $F(c,\bar y)\neq 0$.
  Let $G(\bar y)$ be a nonzero polynomial over $\C$ with minimal number of terms such that
  $G(\bar f)=0$. Since $F(c,\bar y)\neq 0$ and $F(c,\bar f)=0$, $F(c,\bar y)$ satisfies all conditions of
  $G$ except for the minimality, so such a $G$ exists.

  Let
  $$G(\bar y)=\sum_{\bar r\in I}s_{\bar r}\bar y^{\bar r},$$
  where $I$ is a set of $m$-tuples of
  nonnegative integers, $\bar y^{\bar r}=y_1^{r_1}...y_m^{r_m}$, and $s_{\bar r}\in\C$.
  Let $\bar c=(c_{11},...,c_{1m})$,
  and set $\displaystyle \bar f\bar c:=\sum_{j=1}^mf_jc_{1j}$.

  We claim that
    $$\bar r^{(1)}\bar c=\bar r^{(2)}\bar c$$
  for all $\bar r^{(1)},\bar r^{(2)}\in I$. Indeed, otherwise, fixing any $\bar r^*\in I$, we have
  \begin{equation*}\begin{split}
    G^*(\bar y):=&\bar r^*\bar cG(\bar y)-\delta(G(\bar y))\\
    =&\sum_{\bar r\in I}(\bar r^*\bar c)s_{\bar r}\bar y^{\bar r}-\sum_{\bar r\in I} s_{\bar r}\delta\bar y^{\bar r}\\
    =&\sum_{\bar r\in I}(\bar r^*\bar c-\bar r \bar c)s_{\bar r}\bar y^{\bar r}
  \end{split}\end{equation*}
  is a polynomial with fewer terms than $G$ (since the term with index $\bar r^*$ is cancelled) such that its
  coefficients are in $\C$, $G^*(\bar f)=0$ since $G(\bar f)=\delta(G(\bar f))=0$, and $G^*(\bar y)\neq 0$
  as there exist $\bar r\in I$ such that
  $\bar r\bar c\neq\bar r^{(*)}\bar c$. This contradicts the minimality of $G$.

  We now have $\bar r^{(1)}\bar c=\bar r^{(2)}\bar c$ for all $\bar r^{(1)},\bar r^{(2)}\in I$,
   i.e., $(\bar r^{(1)}-\bar r^{(2)})\bar c=0$ for all $\bar r^{(1)},\bar r^{(2)}\in I$.
   But $\{c_{11}...,c_{1m}\}$ is $\Q$-linearly independent, so in fact $\bar r^{(1)}=\bar r^{(2)}$
   for all $\bar r^{(1)},\bar r^{(2)}\in I$.
  Therefore there is only one element $\bar r$ in $I$, and
   $G(\bar f)=s_{\bar r}\bar f^{\bar r}$. Since all $f_j$'s are nonzero, $s_{\bar r}=0$, so $G$ is the zero
   polynomial, a contradiction.
   \end{proof}

  The following lemma is the technical heart of the construction.

  \begin{lemma}\label{lclaim4}
   Fix $i\in\{1,2,...,\ell-1\}$, and for notational convenience, let $m:=n_i$ and $L:=\acl(a_{i-1})$.
  Then the following are true:
  \begin{enumerate}[label=(\roman*)]
  \item
  Suppose $f$ is a solution of (E$_i$) and $(f_1,...,f_m)$ is the decomposition of $f$ by Lemma \ref{lclaim1}.
  Then $f$ is generic over $L$ iff all the $f_j$ are nonzero.
  \item Suppose $f$ is a generic solution to (E$_i$) over $L$, $\alpha\in\Q^{\alg}$ is nonzero,
  and $h$ is a nonzero solution of $D_ix-\alpha fx=0$. Then
    $f$ is the $\C$-coreduction of $h$ over $L$.
  \item
    The $\C$-coreduction of $a_{i+1}$ over $a_{i-1}$ is $a_{i}$.
  \item
    The $\C$-reduction of $a_{i+1}$ over $a_{i-1}$ is $a_{i}$.
  \end{enumerate}
  \end{lemma}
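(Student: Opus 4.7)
For (i), I would invoke Lemma \ref{lclaim2} directly for one direction; its proof shows that if $f$ is generic over $L$, then each $f_j$ is itself a generic solution of $D_i x - c_{ij} x = 0$, hence nonzero. For the converse, I would argue geometrically: the Kolchin-closed set of solutions to (E$_i$) is irreducible of $U$-rank $m$, and for each $j$ the locus where $f_j = 0$ lies in the proper Kolchin-closed subset of solutions of the order-$(m-1)$ linear equation obtained by omitting the factor $(D_i - c_{ij})$ from (E$_i$); hence ``all $f_j$ nonzero" is the generic open condition.

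For (ii), first observe that $f = D_i h / (\alpha h) \in \dcl(Lh)$, and given $f$, the nonzero solutions of $D_i x - \alpha f x = 0$ form a $\mathbb G_m(\C)$-torsor, so $\tp(h/Lf)$ is $\C$-internal. This already forces the $\C$-coreduction $b^*$ of $h$ over $L$ to lie in $\acl(Lf)$. The heart of (ii) is the converse inclusion $f \in \acl(Lb^*)$, and this is the main technical obstacle. My plan is to adapt the derivation-change trick from the proof of Proposition \ref{fprop1}: by passing to a new derivation in which the logarithmic derivative of $h$ is normalized to a constant-coefficient linear form, reduce the failure of almost $\C$-internality to an iterated Fact \ref{fact1}-type statement. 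The $\Q$-linear independence of $c_{i1},\dots,c_{im}$ (in the same spirit as its use in Lemma \ref{lclaim3}) will be essential to rule out algebraic identities that would shrink the coreduction below $f$.

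Parts (iii) and (iv) will then follow from (ii) together with the decomposition machinery of Lemma \ref{lclaim1}. For (iii), I decompose $e_{i+1} = \sum_{j=1}^{n_{i+1}} h_j$ via Lemma \ref{lclaim1} applied to (E$_{i+1}$); each $h_j$ satisfies $D_{i+1}h_j = c_{i+1,j}h_j$, which using $D_{i+1} = D_i/e_i$ rewrites as $D_i h_j - c_{i+1,j} e_i h_j = 0$. Applying (ii) with $f := e_i$ and $\alpha := c_{i+1,j}$, I conclude $e_i$ is the $\C$-coreduction of $h_j$ over $a_{i-1}$ for each $j$. Hence any $b$ for which $\stp(a_{i+1}/a_{i-1}b)$ is almost $\C$-internal also makes $\stp(h_j/a_{i-1}b)$ almost $\C$-internal, forcing $e_i \in \acl(a_{i-1}b)$, i.e., $a_i \in \acl(a_{i-1}b)$. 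Combined with $b \in \acl(a_i)$ (since $\stp(a_{i+1}/a_i)$ is almost $\C$-internal), this yields interalgebraicity of $b$ and $a_i$ over $a_{i-1}$.

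For (iv), I use the same decomposition together with the differential Galois theory of $\mathbb G_m^{n_{i+1}}$. For any $(k_j) \in \mathbb Z^{n_{i+1}}$, the monomial $M := \prod_j h_j^{k_j}$ satisfies $D_i M = \bigl(\sum_j k_j c_{i+1,j}\bigr) e_i M$; by $\Q$-linear independence of $\{c_{i+1,j}\}$, this coefficient is nonzero whenever some $k_j \neq 0$, so (ii) implies $M$ is not almost $\C$-internal over $a_{i-1}$ for any such nontrivial $(k_j)$. Now suppose $b \in \acl(a_{i+1})$ with $\stp(b/a_{i-1})$ almost $\C$-internal but $b \notin \acl(a_i)$. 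The Galois group $(\mathbb G_m(\C))^{n_{i+1}}$ acts on $\acl(a_{i+1}) = \acl(a_i,h_1,\dots,h_{n_{i+1}})$ over $\acl(a_i)$ by $h_j \mapsto u_j h_j$, so the proper intermediate extension $\acl(a_i,b)$ is the fixed field of a proper closed subgroup, and the corresponding nontrivial character produces a nontrivial monomial $M \in \acl(a_i,b)$. But $\stp(a_i,b/a_{i-1})$ is almost $\C$-internal as the union of two almost-$\C$-internal types, making $M$ almost $\C$-internal over $a_{i-1}$ -- contradicting the previous observation. Hence $b \in \acl(a_i)$.
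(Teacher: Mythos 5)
Your proposal has two genuine gaps, both in the parts that carry the real weight of the lemma.

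First, your argument for the hard direction of (i) is a non sequitur. Showing that each locus $\{f_j=0\}$ is a proper Kolchin-closed subset only proves that the \emph{generic} solution has all $f_j$ nonzero -- i.e.\ the direction already given by Lemma \ref{lclaim2}. It does not prove that \emph{every} solution with all $f_j$ nonzero is generic: the complement of a proper closed set contains non-generic points, and a priori $f=\sum f_j$ could satisfy an $L$-algebraic relation among the $f_j$ even though each $f_j\neq 0$. (If, say, $c_{i1}=2c_{i2}$ were allowed, then $f_2^2/f_1$ would be a constant and $f_1+f_2$ would be non-generic with both summands nonzero.) The actual proof must rule out such relations: one takes a relation $G(\bar f)=0$ over $L$ with a minimal number of terms, applies $D_i$ to force a nontrivial monomial $\bar f^{\bar r^{(2)}-\bar r^{(1)}}$ to lie in $\C\cdot L$, and then derives a contradiction -- for $i=1$ from Lemma \ref{lclaim3}-type considerations and the $\Q$-linear independence of the $c_{ij}$, and for $i>1$ by invoking part (ii) \emph{at level $i-1$} to see that such a monomial cannot be almost $\C$-internal over $a_{i-2}$. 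This also reveals a structural omission in your write-up: the four statements must be proved by a simultaneous induction on $i$ ((i) at level $i$ uses (ii) at level $i-1$; (ii) at level $i$ uses (i) at level $i$ and (iv) at level $i-1$; (iii) and (iv) use (ii)), and your proposal never sets this induction up.

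Second, for (ii) you correctly identify $f\in\dcl(Lh)$, the $\mathbb{G}_m(\C)$-torsor structure, and that the content is $f\in\acl(L\beta)$ for $\beta$ the coreduction -- but then you offer only a plan (``adapt the derivation-change trick''), not a proof. The derivation change in Proposition \ref{fprop1} normalizes a single logarithmic derivative to a constant; here the obstruction is that the coreduction could a priori capture only \emph{part} of $f$, i.e.\ some but not all of the components $f_j$ could lie in $\acl(L\beta)$, and no single rescaling of $\delta$ addresses that. The actual argument is an inner induction on $m=n_i$: one first shows no proper subset of the $f_j$ can lie in $\acl(L\beta)$ (using the inductive hypothesis on equations of lower order), then uses automorphisms over $\acl(L\beta)$ sending $f_1\mapsto tf_1$ to deduce $f_j\mapsto tf_j$ for all $j$, concludes $f_j/f_1\in\acl(L\beta)$, and only then reduces to Proposition \ref{fprop1} via a generic solution of $\log D_i^{(2)}x=\epsilon$. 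Without this, (ii) is unproved, and (iii) and (iv) -- which both reduce to (ii) -- are left hanging. That said, your treatments of (iii) and (iv) are sound modulo (ii): (iii) matches the paper's route, while your (iv) replaces the paper's minimal-term polynomial argument with the binding-group/Galois correspondence for $(\mathbb{G}_m(\C))^{n_{i+1}}$, producing the offending monomial $\prod_j h_j^{k_j}$ from a nontrivial character; this is a legitimate and arguably cleaner alternative (note that the fullness of the Galois group itself follows from (ii), and both routes end at the same contradiction), provided you justify the correspondence between $\acl$-closed intermediate extensions and definable subgroups. Note also that both your (iii) and the paper's implicitly use the descent property of coreductions in CBP theories (that $\stp(h_j/a_{i-1}b)$ being almost $\C$-internal forces the coreduction of $h_j$ into $\acl(a_{i-1}b)$ even when $b\notin\acl(a_{i-1}h_j)$); this should be flagged.
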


\begin{proof} We use induction on $i$.

  (i) Suppose the conclusion is true for $i-1$.

  By Lemma \ref{lclaim2}, if $f$ is a generic solution to (E$_i$) over $L$, then for any $j\in\{1,2,...,m\}$, $f_j$
  is a generic solution to $D_ix-c_{ij}x=0$. In particular, $f_j\neq0$.

  Now suppose $f_j\neq 0$ for all $j=1,2,...,m$, but $f$ is not generic, i.e., $U(f/L)<m$.
  Since
  \begin{equation*}\begin{split}
  U(f/a_{i-1})=&U(f_1f_2...f_{m}/a_{i-1})\\
  =&U(f_1/a_{i-1})+U(f_2/a_{i-1}f_1)+...+U(f_{m}/a_{i-1}f_1f_2...f_{m-1}),\\
  \end{split}\end{equation*}
  $U(f_j/a_{i-1}f_1f_2...f_{j-1})<1$ for some $j$, and hence
   $\displaystyle f_j\in L\langle \bigcup_{k\neq j} f_k\rangle$ for that $j$.
   Note that
   $$\delta f_k=(D_if_k)\prod_{j=1}^{i-1} e_{j}=c_{ik}f_k\prod_{j=1}^{i-1} e_{j}\in L(f_k),$$
   so $f_j\in L\langle \bigcup_{k\neq j} f_k\rangle=L(\bigcup_{k\neq j} f_k)$,
   which means that $\{f_1,...,f_m\}$ is algebraically dependent over $L$ in the field theoretic sense.

  Let $\bar f=(f_{1},...,f_{m})$, $\bar y=(y_{1},...,y_{m})$, and $\bar c=(c_{i1},...,c_{im})$.
  Let $G(\bar y)$ be a nonzero polynomial with minimal number of terms such that its coefficients are in $L$ and
  $G(\bar f)=0$. We will use a minimality argument similar to that in the proof of Lemma \ref{lclaim3}.
  Suppose
   $$G(y_1,...,y_{m})=\sum_{\bar r\in I}s_{\bar r}\bar y^{\bar r},$$
  where $I$ is a set of $m$-tuples of
  nonnegative integers, and $s_{\bar r}\in L$ for $\bar r\in I$.
  Now
    \begin{equation*}\begin{split}
    D_i(G(\bar f))=&D_i\sum_{\bar r\in I}s_{\bar r}\bar f^{\bar r}\\
      =&\sum_{\bar r\in I}(\bar f^{\bar r}D_is_{\bar r}+s_{\bar r}D_i\bar f^{\bar r})\\
      =&\sum_{\bar r\in I}(\log D_i s_{\bar r}+\bar r\bar c)s_{\bar r}\bar f^{\bar r}.
    \end{split}\end{equation*}

  We claim that
  $$\log D_i s_{\bar r^{(1)}}+\bar r^{(1)}\bar c=\log D_i s_{\bar r^{(2)}}+\bar r^{(2)}\bar c$$
  for all $\bar r^{(1)},\bar r^{(2)}\in I$. Indeed, otherwise, fixing any $\bar r^*\in I$, we have
    \begin{equation*}\begin{split}
    G^*(\bar y):=&(\log D_i s_{\bar r^{*}}+\bar r^{*}\bar c)G(\bar y)-D_i(G(\bar y))\\
                =&\sum_{\bar r\in I}(\log D_i s_{\bar r^{*}}+\bar r^{*}\bar c
                -\log D_i s_{\bar r}-\bar r\bar c)s_{\bar r}\bar y^{\bar r}
    \end{split}\end{equation*}
  is a polynomial with fewer terms than $G$ (since the term with index $\bar r^*$ is cancelled)
  such that its coefficients are in $L$, $G^*(\bar f)=0$ as $G(\bar f)=D_i(G(\bar f))=0$,
  and $G^*(\bar y)\neq 0$ as there exist $\bar r$ in $I$ such that
  $\log D_i s_{\bar r}+\bar r\bar c\neq \log D_i s_{\bar r^{*}}+\bar r^{*}\bar c$.
  This contradicts the minimality of $G$.

  There are at least two terms in $G(\bar y)$. Indeed, if there is only one term in $G$,
  then $G(\bar y)=s_{\bar r}\bar y^{\bar r}$ for the unique $\bar r\in I$. Since $f_j\neq 0$ for
  $j=1,2,...,m$ and $G(\bar f)=0$, we have $s_{\bar r}=0$, so $G(\bar y)=0$, contradicting the fact
  that $G$ is nonzero.

  We now have
  $$\log D_i s_{\bar r^{(1)}}+\bar r^{(1)}\bar c=\log D_i s_{\bar r^{(2)}}+\bar r^{(2)}\bar c$$
  for all $\bar r^{(1)},\bar r^{(2)}\in I$. Note that
   $\log D_i s_{\bar r}+\bar r\bar c=\log D_i (s_{\bar r}\bar f^{\bar r})$
  for any $\bar r\in I$. Therefore, fixing $\bar r^{(1)}\neq\bar r^{(2)}$ in $I$, we get
  $s_{\bar r^{(1)}}\bar f^{\bar r^{(1)}}=cs_{\bar r^{(2)}}\bar f^{\bar r^{(2)}}$
  for some nonzero $c\in\C$. This means that
    \begin{align}c\bar f^{\bar r^{(2)}-\bar r^{(1)}}=s_{\bar r^{(1)}}s_{\bar r^{(2)}}^{-1}.\tag{*}\end{align}
  Note that as all $f_j\neq0$, $\bar f^{\bar r^{(2)}-\bar r^{(1)}}$ makes sense and is nonzero.
  Let $h=c\bar f^{\bar r^{(2)}-\bar r^{(1)}}$.
  Then $h$ is a nonzero solution to
    \begin{align}\log D_i x=(\bar r^{(2)}-\bar r^{(1)})\bar c.\tag{**}\end{align}
  When $i=1$, right side of (*) is in $\acl(a_0)=\Q^{\alg}\subset \C$,
   so $h$ is also a constant,
  but then it is not a solution for (**). When $i>1$, we apply part (ii) of the lemma
  for $i-1$ with $e_{i-1}$ a generic solution of (E$_{i-1}$) over $a_{i-2}$,
  $\alpha=(\bar r^{(2)}-\bar r^{(1)})\bar c^*\neq0$, and $h$
  a nonzero solution of $D_{i-1}x-dx=0$.
   We get that $e_{i-1}$ is the coreduction of $h$ over $a_{i-2}$.
   In particular, since $e_{i-1}\not\in\acl(a_{i-2})$, we have that $\stp(h/a_{i-2})$ is not almost $\C$-internal.
  But the right side of (*)
  is in $L$ which is almost $\C$-internal over $a_{i-2}$, a contradiction.

  (ii) Suppose the conclusion is true for $i-1$, and (i) is true for $i$.

  We use induction on $m$, the order of the differential equation (E$_i$).

  If $m=n_i=1$, we have that $\log D_ih=\alpha f$ and $\log D_i(\alpha f)=c_{i1}$.
  Let $h^*$ be a generic solution of $\log D_ix=\alpha f$ over $Lf$. Since $f$ is a generic solution of
    $\log D_i(x)=c_{i1}$ over $L$, $\alpha f$ is also a generic solution of
    $\log D_i(x)=c_{i1}$ over $L$, and therefore $h^{*}$ is a generic solution of $\log D_i^{(2)}x=c_{i1}$
    over $a_{i-1}$. Thus $\stp(h^*/L)$ is not almost $\C$-internal by Proposition \ref{fprop1}.
    Since $h^*$ is a constant multiple of $h$, $\stp(h/L)$ is also not almost $\C$-internal.
    Note that $(f,h)$ is a $\C$-analysis of $h$ over $L$, and as
    it is incompressible of $U$-type $(1,1)$, we have that $f$ is the $\C$-coreduction of $h$
    over $L$.

  Now suppose the conclusion of (ii) is proven if the order of the equation (E$_i$) is less than or equal to $m-1$.

  Let $\beta$ be the $\C$-coreduction of $h$ over $L$.
  Since $\stp(h/Lf)$ is almost $\C$-internal, we only need to show that $f\in\acl({{L}}\beta)$.
  Let $(f_{1},...,f_{m})$ be the decomposition of $f$ by Lemma \ref{lclaim1}. By Lemma \ref{lclaim2},
   $f_j$ is a generic solution
  of $D_ix-c_{ij}x=0$ for $j=1,2,...,m$.
  Suppose towards a contradiction that $f\not\in\acl(L\beta)$.
   We may, without loss of generality, suppose $f_{1},...,f_{s}\not\in\acl(L\beta)$ and
   $f_{s+1},...,f_{m}\in\acl(L\beta)$ for some $1\leq s\leq m$.

  In the rest of the proof we seek a contradiction to the above assumption.

  We prove first that $s=m$. Suppose not, so $f_m\in \acl(L\beta)$.
  Let $h_m$ be a nonzero solution to $D_ix-\alpha f_mx=0$. We have that $\stp(h_m/Lf_m)$
  is almost $\C$-internal. Since $f_m\in\acl(L\beta)$, $\stp(h_m/L\beta)$ is almost $\C$-internal.
  Let $h^*=hh_m^{-1}$. Then 
  \begin{equation*}\begin{split}
    \log D_i( h^*)&=\log D_i (h)-\log D_i(h_m)\\
    &=\alpha (f_1+...+f_{m-1}+f_m)-\alpha f_m\\
    &=\alpha (f_1+...+f_{m-1}).
  \end{split}\end{equation*}
  Let $f^*=f_1+...+f_{m-1}$. Then $h^*$ is a nonzero solution to
  $D_ix-\alpha f^*x=0.$
   From (i), since $f_1,...,f_{m-1}$ are all nonzero, $f^*$
  is a generic solution over $L$ to
  $$(D_i-c_{i1})...(D_i-c_{i,m-1})x=0.$$
  By the induction hypothesis,
  we conclude that the $\C$-coreduction of $h^*$ over $L$ is
  $f^*$. Since $h$ and $h_m$ are almost $\C$-internal over $L\beta$ and $h^*=hh_m^{-1}$,
  we get that $f^*\in\acl(L\beta)$.
  As $f^*$ is
  interdefinable with $(f_1,...,f_{m-1})$ over $L$, $f_1\in\acl({L}\beta)$,
  contradicting our assumption.

  Let $g_{t1}=tf_1$ for $t=1,2,...$. We show that $\stp(g_{t1}/L\beta)=\stp(f_1/L\beta)$.
  Since
    \begin{equation}\label{claim3}D_ig_{t1}-c_{i1}g_{t1}=tD_if_1-tc_{i1}f_1=0,\end{equation}
  we have that $g_{t1}\in\{x:D_ix-c_{i1}x=0\}$, a strongly minimal set.
  Thus
  in order to prove $\stp(g_{t1}/L\beta)=\stp(f_1/L\beta)$ we only need to show that
  $g_{t1}\not\in\acl(L\beta)$, which follows from $f_1\not\in\acl(L\beta)$.

  For each integer $t\geq 1$, let $\eta_t$ be an automorphism
  fixing $\acl(L\beta)$ and taking $f_1$ to $g_{t1}$.
  Set $g_{tj}:=\eta_t(f_j)$ for all $j=1,2,...,m$, $g_t:=\eta_t(f)$, and $h_t:=\eta_t(h)$.
  So $\stp(h_t,g_t,g_{t1},...,g_{tm}/L\beta)=\stp(h,f,f_{1},...,f_{m}/L\beta)$ for all $t\geq 1$. In particular,
  $g_t$ is a generic solution to (E$_i$) over $L$, $h_t$ is a nonzero solution to
  $D_ix-\alpha g_tx=0$, $\displaystyle g_t=\sum_{j=1}^mg_{tj}$ is the decomposition by Lemma \ref{lclaim1},
  and $\stp(h_t/\beta)$ is almost $\C$-internal.

  We next show that $g_{tj}=tf_{j}$ for all $t\geq1$ and all $j$.

  Towards a contradiction,
  suppose that $g_{tj}\neq tf_{j}$ for some $t$ and $j$.
  Fix this $t$. We argue first that $g_{tj}-tf_j\in\acl(L\beta)$. Let $H=h_th^{-t}$, and
  let $I=\{j:2\leq j\leq m,g_{tj}-tf_{j}\neq0\}$ (note that $g_{t1}=tf_{1}$, so
  we only need $j\geq 2$; also note that $I$ is nonempty since $g_{tj}\neq tf_{j}$ for some $j$
  by assumption).
  We have that
  \begin{equation*}\begin{split}
    D_i H=&(\log D_iH)H\\
    =&(\log D_ih_t-t\log D_ih)H\\
    =&(\alpha g_t-t\alpha f)H\\
    =&(\alpha \sum_{j=1}^m(g_{tj}-tf_{j}))H,\\
    =&(\alpha \sum_{j\in I}(g_{tj}-tf_{j}))H.
  \end{split}\end{equation*}
  So $H$ is a nonzero solution of $D_ix-(\alpha\sum_{j\in I}(g_{tj}-tf_{j}))x=0$.

  Note that $\sum_{j\in I}(g_{tj}-tf_{j})$ is a solution to
  \begin{equation}\label{claim32}\left(\prod_{j\in I} (D_i-c_{ij})\right)(x)=0.\end{equation}
  This is because (\ref{claim32}) is linear, and for each $j\in I$,
  $$(D_i-c_{ij})(g_{tj}-tf_{j})=(D_i-c_{ij})g_{tj}-(D_i-c_{ij})tf_{j}=0.$$

  The decomposition of $\displaystyle\sum_{j\in I}(g_{tj}-tf_{j})$ by Lemma \ref{lclaim1} with respect to (\ref{claim32}) is
  $(g_{tj}-tf_{j})_{j\in I}$, and $g_{tj}-tf_{j}\neq 0$ for every $j\in I$. Therefore,
  applying part (i) where we replace (E$_i$) with (\ref{claim32}),
  we get that $\sum_{j\in I}(g_{tj}-tf_{j})$ is a generic solution to (\ref{claim32}) over $L$.

  Now, since (\ref{claim32}) is of order less than $m$
  and $H$ is a nonzero solution of $D_ix-(\alpha\sum_{j\in I}(g_{tj}-tf_{j}))x=0$,
  by the induction hypothesis, the coreduction of $H$ over $L$ is $\sum_{j\in I}(g_{tj}-tf_{j})$.
  Since $H=h_th^{-t}$ and both $h$ and $h_t$ are almost $\C$-internal over
  $L\beta$, we have $\stp(H/L\beta)$
  is almost $\C$-internal. Therefore, for any $j\in I$, $g_{tj}-tf_{j}\in\acl(L\beta)$.
  We now fix some $j\in I$.

  Let $\gamma=\displaystyle\frac{g_{tj}}{f_{j}}-t=\displaystyle\frac{g_{tj}-tf_{j}}{f_{j}}\neq 0$.
  Then $\gamma$ is a constant in $\acl(LF)\backslash\acl(L\beta)$. Indeed, $\gamma$ is a constant
  because $g_{tj}$ and $f_j$ are both solutions to $D_ix-c_{ij}x=0$, and hence
  $\displaystyle\frac{g_{tj}}{f_{j}}\in\C$.
  We get $\gamma\in\acl(Lf)$ by the fact that $g_{tj}-tf_j\in\acl(L\beta)\subseteq\acl(Lf)$.
  And $\gamma\not\in\acl(L\beta)$ because if it were, then so would
  $\displaystyle f_j=\frac{g_{tj}-tf_{j}}{\gamma}$,
  but we know that is not the case.

  When $i=1$ this is impossible, since $\acl(Lf)=\acl(f)$, and Lemma \ref{lclaim3} tells us that
  $\acl(f)\cap\C=\Q^{\alg}$.

  Suppose $i>1$.  We apply part (iv) of the lemma for $i-1$
  and get that the $\C$-reduction
  of $a_i$ over $a_{i-2}$ is $a_{i-1}$.
  As $f$ is a generic solution of (E$_i$) over $L$,
  $\stp(f/L)=\stp(e_i/L)$, so the $\C$-reduction
  of $f$ over $a_{i-2}$ is $a_{i-1}$.
  Since $\gamma\in\acl(Lf)\backslash\acl(L\beta)$,
  $\gamma\not\in L=\acl(a_{i-1})$. So
  $\stp(\gamma/a_{i-2})$ is not almost $\C$-internal. On the other hand, $\gamma$ is a constant, a contradiction.

  What we have actually shown is that for any $t\geq 1$, $\stp(tf_1/L\beta)=\stp(f_1/L\beta)$, and if
  $\stp(\tilde f_1,\tilde f_2,...,\tilde f_m/L\beta)=\stp(f_1,...,f_m/L\beta)$ and
  $\tilde f_1=tf_1$, then $\tilde f_j=tf_j$ for $j=2,3,...,m$. In particular,  $\stp(tf_1,...,tf_m/L\beta)=\stp(f_1,...,f_m/L\beta)$ holds
  for all $t$. In addition, the case of $t=1$ tells us that $f_j\in\dcl(f_1\acl(L\beta))$
  for $j=2,3,...,m$.

  We now show that $\displaystyle\frac{f_j}{f_1}\in\acl(L\beta)$ for $j=2,3,...,m$.
  Fix some $j$.
  Since $f_j\in\dcl(f_1\acl(L\beta))$, there exists a formula $\varphi_1(x,y)$ over $\acl(L\beta)$
  such that
  $\varphi_1(\U,f_1)=\{f_j\}$. Since $\stp(tf_1,tf_j/L\beta)=\stp(f_1,f_j/L\beta)$,
  we have $\varphi_1(\U,tf_1)=\{tf_j\}$ for all $t$. Now set
  $\displaystyle\varphi_2(x,y):=\forall z(\varphi_1(z,y)\rightarrow x=\frac zy)$.
  Then $\displaystyle\varphi_2(\U,tf_1)=\left\{\frac{f_j}{f_1}\right\}$ for all $t$.
  So we have
  $$\{tf_1:t\geq1\}\subseteq\left\{b\in\U:\log D_ib=c_{i1}\text{ and }\varphi_2(\U,b)=\left\{\frac{f_j}{f_1}\right\}\right\}.$$
  Since $\log D_ix=c_{i1}$ is strongly minimal, it must be that for all but finitely many solutions
  to $\log D_ix=c_{i1}$, $\displaystyle\varphi_2(\U,b)=\left\{\frac{f_j}{f_1}\right\}$.
  It follows that $\displaystyle\frac{f_j}{f_1}\in\acl(L\beta)$.

  Let $g_{01}$ be a generic solution over
  $Lh$ to $D_ix-c_{i1}x=0$, and $\displaystyle g_{0j}=g_{01}\frac{f_{j}}{f_{1}}$ for $j=2,3,...,m$.
  We have shown that each $\displaystyle\frac {f_{j}}{f_{1}}$
  is in $\acl(L\beta)$, so $(g_{01},...,g_{0m})\in\acl(L\beta g_{01})$.
  Let $\displaystyle c_{01}=\frac{f_{1}}{g_{01}}\in\C$.
  Now,
  \begin{equation*}\begin{split}
  \log D_i^{(2)}(h)=&\log D_i(\alpha f)\\
  =&\log D_i(\alpha(f_1+...+f_m))\\
  =&\log D_i(\alpha c_{01}(g_{01}+...+g_{0m}))\\
  =&\log D_i(g_{01}+...+g_{0m})=:\epsilon.
  \end{split}\end{equation*}
  Hence $h$ is a solution to $\log D_i^{(2)}(x)=\epsilon$ which is over $\acl(L\beta g_{01})$,
  so $U(h/L\beta g_{01})\leq 2$.
  Note that $U(h/L\beta)\geq 2$ since $h$ is a generic solution to $\log D_i x=\alpha f$ and $U(f/L\beta)\geq 1$.
  But we also have $h\underset{L\beta}\forkindep g_{01}$ (recall that $\beta\in\acl(Lh)$),
  so $U(h/L\beta g_{01})=U(h/L\beta)\geq 2$. Thus $U(h/L\beta g_{01})=2$, and $h$ is a generic solution to
  $\log D_i^{(2)}(x)=\epsilon$ over $\acl(L\beta g_{01})$.
  Hence $\stp(h/L\beta g_{01})$ is not almost $\C$-internal by Proposition \ref{fprop1},
  and therefore $\stp(h/L\beta)$ is not almost $\C$-internal, contradicting the definition of $\beta$.

  (iii)
  Assume part (ii) of the lemma is true for $i$.

  Let $e_{i+1}=\sum_{j=1}^{n_{i+1}}b_{i+1,j}$ be the decomposition by Lemma \ref{lclaim1} with respect to (E$_{i+1}$).
   We have that $\stp(a_{i+1}/a_i)$ is almost $\C$-internal. Also,
  by part (ii) applied to $f=e_i$ and $h=b_{i+1,1}$, the $\C$-coreduction of $b_{i+1,1}$ over $a_{i-1}$ is $e_i$,
  which is interdefinable over $a_{i-1}$ with $a_i$. Since $b_{i+1,1}\in\dcl(a_ie_{i+1})=\dcl(a_{i+1})$,
  the $\C$-coreduction of $a_{i+1}$ over $a_{i-1}$ is $a_i$.

  (iv)
  Assume parts (i) and (ii) of the lemma are true for $i$. For simplicity, we use $n$ to denote $n_{i+1}$. Let $K$ be the algebraically closed field generated by $a_{i}$. Let $\bar b_{i+1}=(b_{i+1,1},...,b_{i+1,n})$.

  We already know that $\stp(a_i/a_{i-1})$ is $\C$-internal.
  Suppose $\beta\in\acl(a_{i+1})$ is almost $\C$-internal over $a_{i-1}$ and $\beta\not\in\acl(a_i)$.
  Since $e_{i+1}$ is interalgebraic with $\bar b_{i+1}$ over $a_{i}$,
  $\beta\in\acl(a_{i}\bar b_{i+1})$, which means
  $\beta\in K\langle \bar b_{i+1}\rangle^{\alg}$. Since $\displaystyle \delta b_{i+1,j}=c_{i+1,j}b_{i+1,j}\prod_{k=1}^{i} e_{k}
  \in K(b_{i+1,j})$ for $j=1,2,...,n$, we have $K\langle \bar b_{i+1}\rangle=K(\bar b_{i+1})$,
  so $\beta\in K( \bar b_{i+1})^{\alg}$.
  Thus there exist a polynomial $F(x,y_1,...,y_{n})$ with coefficients in $K$
  such that $F(\beta,b_{i+1,1},...,b_{i+1,n})=0$ and $F(x,b_{i+1,1},...,b_{i+1,n})\neq0$.
  Also, $F(\beta,y_1,...,y_n)\neq 0$ since $\beta\not\in K$.

  Suppose $G(y_1,...,y_{n})$
  is a nonzero polynomial with minimal number of terms such that the coefficients of $G$ are almost $\C$-internal
  over $a_{i-1}$
  and $G(\bar b_{i+1})=0$. Note that this is well-defined because $F(\beta,y_1,...,y_n)$ satisfies
  all the conditions except for the minimality, as $K$ and $\beta$ are both almost $\C$-internal over $a_{i-1}$.

  Let
    $$G(y_1,...,y_{n})=\sum_{\bar r\in I}s_{\bar r}\bar y^{\bar r},$$
  where $I$ is a set of $n$-tuples of
  nonnegative integers, and $\stp(s_{\bar r}/a_{i-1})$ is almost $\C$-internal.
  Let $\bar c_{i+1}=(c_{i+1,1},...,c_{i+1,n})$.
  Arguing exactly as in the proof of part (i) of the lemma, we get by minimality of $G$ that
  \begin{equation}\label{claim34}\log D_i s_{\bar r^{(1)}}+\bar r^{(1)}\bar c_{i+1}e_i=
  \log D_i s_{\bar r^{(2)}}+\bar r^{(2)}\bar c_{i+1}e_i
  \end{equation}
  for any $r^{(1)},r^{(2)}\in I$. Indeed,
  \begin{equation*}\begin{split}
  D_i(G(\bar b_{i+1}))
  =&\sum_{\bar \in I} (\bar b_{i+1}^{\bar r} D_is_{\bar r}+s_{\bar r}D_i\bar b_{i+1}^{\bar r})\\
  =&\sum_{\bar \in I} (\bar b_{i+1}^{\bar r} D_is_{\bar r}+s_{\bar r}\bar r\bar c_{i+1} e_i\bar b_{i+1}^{\bar r})\\
  =&\sum_{\bar \in I} ( \log D_is_{\bar r}+\bar r\bar c_{i+1} e_i)s_{\bar r}\bar b_{i+1}^{\bar r},\\
  \end{split}\end{equation*}
  where the second equality is by the fact that
  \begin{equation*}\begin{split}
    D_i\bar b_{i+1}^{\bar r}=&\bar r\bar b_{i+1}^{\bar r-\bar 1}D_i\bar b_{i+1}\\
    =&\bar r\bar b_{i+1}^{\bar r-\bar 1}e_iD_{i+1}\bar b_{i+1}\\
    =&\bar r\bar b_{i+1}^{\bar r-\bar 1}e_i\bar c_{i+1}\bar b_{i+1}\\
    =&\bar re_i\bar c_{i+1}\bar b_{i+1}^{\bar r}.\\
  \end{split}\end{equation*}
  Now if (\ref{claim34}) failed, then fixing any $\bar r^*\in I$ we see that
  \begin{equation*}\begin{split}
    G^*(\bar y):=&(\log D_i s_{\bar r^*}+\bar r^*\bar c_{i+1}e_i)G(\bar y)-D_iG(\bar y)\\
    =&\sum_{\bar r\in I}(\log D_i s_{\bar r^*}+\bar r^*\bar c_{i+1}e_i
    -\log D_i s_{\bar r}-\bar r\bar c_{i+1}e_i)s_{\bar r}\bar y^{\bar r}
  \end{split}\end{equation*}
  whose coefficients are again almost $\C$-internal over $a_{i-1}$, would contradict the minimal choice of $G$.

  If $G$ has only one term, then for the only $\bar r\in I$, $G(\bar b_{i+1})=s_{\bar r}\bar b_{i+1}^{\bar r}$.
  Since $b_{i+1,j}\neq 0$ for $j=1,2,...,n$, $s_{\bar r}=0$, which means $G(\bar y)=0$,
  a contradiction. Now fix $r^{(1)}\neq r^{(2)}$ in $I$.
  Since $\log D_i s_{\bar r}+\bar r\bar c_{i+1}e_i=\log D_i (s_{\bar r}\bar b_{i+1}^{\bar r})$
  for any $\bar r\in I$, we have
  $s_{\bar r^{(1)}}\bar b_{i+1}^{\bar r^{(1)}}=cs_{\bar r^{(2)}}\bar b_{i+1}^{\bar r^{(2)}}$
  for some $c\in\C$. This means that
  $\bar b_{i+1}^{\bar r^{(1)}-\bar r^{(2)}}=cs_{\bar r^{(2)}}s_{\bar r^{(1)}}^{-1}$.
  So $\bar b_{i+1}^{\bar r^{(1)}-\bar r^{(2)}}$ is almost $\C$-internal over $a_{i-1}$.

  On the other hand, as $D_{i+1}\bar b_{i+1}^{\bar r^{(1)}-\bar r^{(2)}}=(\bar r^{(1)}-\bar r^{(2)})\bar c_{i+1}\bar b_{i+1}^{\bar r^{(1)}-\bar r^{(2)}}$,
  $\bar b_{i+1}^{\bar r^{(1)}-\bar r^{(2)}}$ is a solution of $(D_{i+1}-(\bar r^{(1)}-\bar r^{(2)})\bar c_{i+1})x=0$,
  with $(\bar r-\bar r^*)\bar c_{i+1}\neq 0$ since $\{c_{i+1,j}:j=1,2,...,n\}$ is $\Q$-linearly independent.
  By part (ii) of the lemma with $f=e_i$, $h=\bar b_{i+1}^{\bar r^{(1)}-\bar r^{(2)}}$, and
  $\alpha=(\bar r^{(1)}-\bar r^{(2)})\bar c_{i+1}$,
  $e_i$ is a $\C$-coreduction of $\bar b_{i+1}^{\bar r^{(1)}-\bar r^{(2)}}$
  over $a_{i-1}$. In particular,
  $\bar b_{i+1}^{\bar r^{(1)}-\bar r^{(2)}}$ is not almost $\C$-internal over $a_{i-1}$. This contradiction proves
  part (iv) of the lemma.
  \end{proof}

  We have accomplished the desired construction:

  \begin{theorem}
    Given positive integers $n_1,...,n_\ell$, there exists in $\DCF$ a type over $\Q^{\alg}$ that admits a canonical
    $\C$-analysis of $U$-type $(n_1,...,n_\ell)$.
  \end{theorem}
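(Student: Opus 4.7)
The construction of $(D_i, e_i)$ and $a_i = (e_1, \ldots, e_i)$ has already been laid out before the theorem, and virtually all the technical work has been done in Lemma \ref{lclaim4}. So the proof is essentially an assembly of these pieces, and my plan is to point out precisely which pieces go where.

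First I would verify that $(a_1, \ldots, a_\ell)$ is a $\C$-analysis of $\tp(a_\ell/\Q^{\alg})$ with $U$-type $(n_1, \ldots, n_\ell)$. By construction $a_{i-1} \in \dcl(a_i)$, and since $e_i$ is a generic solution of the order-$n_i$ linear differential equation (E$_i$) over $a_{i-1}$, we have $U(a_i/a_{i-1}) = U(e_i/a_{i-1}) = n_i$. Moreover the solution set of (E$_i$) is almost $\C$-internal over $a_{i-1}$ (it is the translate of an $n_i$-dimensional $\C$-vector space of solutions), so $\stp(a_i/a_{i-1})$ is almost $\C$-internal. This establishes that $(a_1, \ldots, a_\ell)$ is a $\C$-analysis with the prescribed $U$-type.

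Next I would show this analysis is simultaneously a $\C$-analysis by reductions and by coreductions. By Lemma \ref{lclaim4}(iv), $a_i$ is the $\C$-reduction of $a_{i+1}$ over $a_{i-1}$ for each $i = 1, \ldots, \ell - 1$, so by the local criterion Lemma \ref{lemcr1}, $(a_1, \ldots, a_\ell)$ is a $\C$-analysis by reductions. Similarly Lemma \ref{lclaim4}(iii) says $a_i$ is the $\C$-coreduction of $a_{i+1}$ over $a_{i-1}$, so by Lemma \ref{lemcr2}, $(a_1, \ldots, a_\ell)$ is also a $\C$-analysis by coreductions. (Technically the criterion in Lemma \ref{lemcr2} is phrased over $A$ rather than $Aa_{i-1}$, but the statement of Lemma \ref{lclaim4}(iii) provides the coreduction over $a_{i-1}$, which is stronger and suffices; one could just invoke the coreduction part as stated.)

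Finally, since $(a_1, \ldots, a_\ell)$ is both an analysis by reductions and an analysis by coreductions, trivially they are the same analysis, hence have the same $U$-type. Proposition \ref{unique} then yields that it is canonical. The hard part, of course, is Lemma \ref{lclaim4}, which has already been proved; once that is in hand, the present theorem is a short bookkeeping exercise combining Lemmas \ref{lemcr1}, \ref{lemcr2} and Proposition \ref{unique}.
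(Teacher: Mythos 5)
Your proposal is correct and follows essentially the same route as the paper: the paper's proof likewise just observes that the construction gives a $\C$-analysis of the right $U$-type and then cites Lemmas \ref{lemcr1}, \ref{lemcr2} and parts (iii)--(iv) of Lemma \ref{lclaim4} to conclude it is an analysis by both reductions and coreductions, with canonicity following from Proposition \ref{unique}. (Minor quibble: your parenthetical has the direction reversed --- Lemma \ref{lemcr2} is the criterion phrased over $Aa_{i-1}$, which is exactly what Lemma \ref{lclaim4}(iii) supplies --- but this does not affect the argument.)
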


  \begin{proof}
  Let $(a_1,...,a_\ell)$ be as in the above construction. We have seen that $(a_1,...,a_\ell)$ is a
  $\C$-analysis of $p=\stp(a_\ell)$ of $\U$-type $(n_1,...,n_\ell)$.
  By Lemmas \ref{lemcr1} and \ref{lemcr2}, parts (iii) and (iv) of Lemma \ref{lclaim4}
  prove that it
  is a $\C$-analysis by reductions and coreductions, as desired.
  \end{proof}

\bibliographystyle{plain}
\bibliography{diffDME}

\end{document}